\newcommand*{\Z}{\mathbb Z}
\newcommand*{\R}{\mathbb R}
\newtheorem{lemma}{Lemma}[section]
\newtheorem{cor}{Corollary}[section]
\newtheorem{prop}{Proposition}[section]
\newtheorem{theorem}{Theorem}[section]
\newtheorem{remark}{Remark}[section]
\def\R{\mathbb{R}}
\def\eps{\varepsilon}
\begin{document}

\title[Nonlocal evolution equations in perforated domains]
{Nonlocal and nonlinear evolution \\ equations in perforated domains}

\author[M. C. Pereira and
Silvia Sastre-Gomez]{Marcone C. Pereira and
Silvia Sastre-Gomez}

\address{Marcone C. Pereira
\hfill\break\indent Dpto. de Matem{\'a}tica Aplicada, IME,
\hfill\break\indent
Universidade de S\~ao Paulo, \hfill\break\indent Rua do Mat\~ao 1010, 
S\~ao Paulo - SP, Brazil. } \email{{\tt marcone@ime.usp.br} \hfill\break\indent {\it
Web page: }{\tt www.ime.usp.br/$\sim$marcone}}

\address{Silvia Sastre-Gomez
\hfill\break\indent Dpto. de Matem{\'a}tica, CCEN,
\hfill\break\indent
Universidade Federal de Pernambuco, \hfill\break\indent Av. Prof. Moraes Rego, 1235, Recife - PE, Brazil. } 
\email{{\tt   silvia.sastre@dmat.ufpe.br} \hfill\break\indent 
}

\keywords{perforated domains, nonlocal equations, semilinear equations, Dirichlet problem.\\
\indent 2010 {\it Mathematics Subject Classification.} 45A05, 45M05, 49J40.}

\begin{abstract} 
In this work we analyze the behavior of the solutions to nonlocal evolution equations of the form 
$u_t(x,t) = \int J(x-y) u(y,t) \, dy - h_\eps(x) u(x,t) + f(x,u(x,t))$ 
with $x$ in a perturbed domain $\Omega^\eps \subset \Omega$ which is thought as a fixed set $\Omega$ from where we
remove a subset $A^\eps$ called the holes.
We choose an appropriated families of functions $h_\eps \in L^\infty$ in order to deal with both Neumann and Dirichlet conditions in the holes setting a Dirichlet condition outside $\Omega$.
Moreover, we take $J$ as a non-singular kernel and $f$ as a nonlocal nonlinearity.
Under the assumption that the characteristic functions of $\Omega^\eps$ have a weak limit, 
we study the limit of the solutions providing a nonlocal homogenized equation. 
\end{abstract}

\maketitle

\section{Introduction and main results}
\label{Sect.intro}
\setcounter{equation}{0}

Let $\Omega^\eps \subset \R^N$ be a family of open bounded sets  satisfying 
$\Omega^\eps \subset \Omega$
for some fixed open bounded domain $\Omega \subset \R^N$ and a positive parameter $\eps$. 
Denoting $\chi_\eps \in L^\infty(\R^N)$ by the characteristic function of $\Omega^\eps$,   
we assume that there exists a function $\mathcal{X} \in L^\infty(\R^N)$, strictly positive inside $\Omega$ 
such that 
$\chi_\eps \rightharpoonup \mathcal{X}$  weakly$^*$ in $L^\infty(\Omega)$.
More precisely, we suppose    
\begin{equation} \label{chic}
\int_{\Omega} \chi_\eps(x) \, \varphi(x) \, dx \to \int_{\Omega} \mathcal{X}(x) \, \varphi(x) \, dx, \quad \textrm{ as } \eps \to 0,
\end{equation}
for all $\varphi \in L^1(\Omega)$, and there exists a positive constant $c>0$ such that 
\begin{equation} \label{chib}
\mathcal{X}(x) \geq c > 0 \quad \textrm{ for all } x \in \Omega. 
\end{equation}

Notice that we also have $\mathcal{X}(x) \leq 1$ in $\Omega$ with $\mathcal{X}(x) \equiv 0$ in $\R^N \setminus \Omega$ since the family of characteristic functions $\chi_\eps$ satisfy the same conditions for all $\eps>0$.

Here, we see the family of open sets $\Omega^\eps$ as a family of perforated domains where the set 
$$
A^\eps = \Omega \setminus \Omega^\eps
$$
can be thought as the holes inside $\Omega$.
Our main goal is to analyze the asymptotic behavior of the solutions of a 
semilinear nonlocal evolution problem with nonlocal reaction and non-singular kernel in perforated domains as $\varepsilon \to 0$.

We consider problems of the form
 \begin{equation} \label{pintro}
\begin{gathered}
u_t(x,t) = \int_{\R^N \setminus A^\eps} J(x-y) u(y,t) \, dy - h_\eps(x) \, u(x,t)  + f(x, u(x,t)), \\
u(x,0) = u_0(x),
\end{gathered} 
\end{equation}
 for $x \in \Omega^\eps$ and $t$ in bounded intervals 
taking nonlinearities  
 $f:\Omega^\eps \times L^1(\Omega^\eps) \mapsto \R$
 which are nonlocal reaction terms defined by 
 $$
 f(x,u) = (g \circ m_{\Omega^\eps})(x,u)
 $$
 under the following conditions:
 
${\bf (H_f)}$ We assume $g:\R \mapsto \R$ is a smooth function, globally Lipschitz, and 
$$m_{\Omega^\eps}: \Omega^\eps \times L^1(\Omega^\eps) \mapsto \R$$ 
is given by
$$
 m_{\Omega^\eps}(x,u) = \frac{1}{| B_\delta (x) \cap \Omega^\eps |} \int_{B_\delta (x) \cap \Omega^\eps } u(y) \, dy
 $$
where $B_\delta(x)$ is the ball of radius $\delta>0$ centered at $x \in \Omega^\eps$.

 Here, and along the whole paper, the function
$J$ is a smooth non-singular kernel satisfying  
$$
{\bf (H_J)} \qquad 
\begin{gathered}
J \in \mathcal{C}(\R^N,\R) \textrm{ is non-negative with } J(0)>0, \; J(-x) = J(x) \textrm{ for every $x \in \R^N$ and } \\
\int_{\R^N} J(x) \, dx = 1.
\end{gathered}
$$
 
 We consider both Dirichlet and Neumann nonlocal problems. For the Dirichlet case we impose $h_\eps(x) \equiv 1$ with 
$u$ vanishing in $\R^N \setminus \Omega^\eps$  
while in the Neumann case we consider 
$$
h_\eps(x) = \int_{\R^N \setminus A^\eps} J(x-y) \, dy, \quad x \in \Omega^\eps, 
$$
only assuming that $u$ vanishes in $\R^N\setminus \Omega$.
Note that for the former we have considered nonlocal Neumann boundary conditions
in the holes $A^\eps$ and a Dirichlet boundary condition in the exterior of the set $\Omega$.

It is not difficult to see that there are positive constants $\eps_0$ and $C_0$ such that 
 \begin{equation} \label{177}
|B_\delta(x) \cap \Omega^\epsilon | = \int_{B_\delta(x) \cap \Omega^\eps} \chi_\eps(y) \, dy  \geq C_0 
 \end{equation}
for all  $x \in \Omega$ and $0 < \eps < \eps_0$.
In particular, we have 
$\int_{B_\delta(x)} \mathcal{X}(y) \, dy  \geq C_0$ for all $x \in \Omega$.
 Indeed, inequality \eqref{177} follows from \eqref{chic} and \eqref{chib} since $\Omega \subset \R^N$ is a bounded open set.

See also that the map $m_{\Omega^\eps}$ transforms $(x, u)$ into $m_{\Omega^\eps}(x,u)$, the average of function $u$ in a set given by the intersection between the ball $B_\delta(x)$ with the perforated domain $\Omega^\eps$ setting the nonlocal nonlinear effect in our model.

According to \cite{Hut}, equation \eqref{pintro} can be seen as a continuous model for a single species in a finite $N$-dimensional habitat where the density of the population at position $x$ and time $t$ is given by a function $u(x,t)$.
Hostile surroundings are modeled by the Dirichlet conditions whereas the Neumann condition is the standard approach to modeling species in geographically isolated regions.  
The nonlocal effect under reaction terms is discussed for instance in \cite{Furter}. It is used to model situations where the total biomass plays a role and the model incorporates group defense or visual communications. 
See also \cite{Suarez11, Melian_sabina, react_chen}.

In fact, there exists a big interest in the study of nonlocal diffusion equations to model different problems from different areas. We still mention \cite{Allegretto, ElLibro, Fife, Freitas, Murray2002, Murray2003} and references therein where population dynamical processes and chemical reaction-diffusion models are treated. In \cite{gen_auger}, an economic model to fluctuation of stock market is presented.

 The paper is organized as follows: in Section \ref{mainresults}, we introduce the main results of the paper discussing the classic situation known as periodic perforated domains. In Section \ref{existence}, we study existence and uniqueness of the solutions to \eqref{pintro} obtaining uniform estimates on parameters $\eps$ and $\delta>0$. The proofs of our main results Theorem \ref{theoD} and Theorem \ref{theoN} are given in Section \ref{limit}. 
 
 In Section \ref{deltaS}, we discuss the asymptotic behavior of the limit equations, when parameter $\delta$ goes to zero obtaining a nonlocal problem with local nonlinearity. In this way, we give a scenario taking first $\varepsilon \to 0$, and next $\delta \to 0$. In a forthcoming paper, we will study the reversed limit. As noticed in \cite{nosotros2, nosotros}, a double limit commuting it is not expected. 
 
 Finally, we emphasize that the results obtained here are also in agreement with the previous works \cite{mjp,nosotros, mcp} 
where nonlocal linear equations have been considered. Therefore, this work is a natural continuation for nonlocal and nonlinear equations in perforated domains.

\section{Main results} \label{mainresults}

We have the following result for the Dirichlet problem:
\begin{theorem} \label{theoD}
Let $\{ u^\eps \}_{\eps>0}$ be the family of solutions given by problem \eqref{pintro} under conditions 
\begin{equation} \label{Dcond}
\begin{gathered}
u^\eps(x,t) \equiv 0 \quad \textrm{ for all } x \in \R^N \setminus \Omega^\eps \textrm{ and } t > 0 \\
u^\eps(x,0) = u_0(x)  \textrm{ in } L^2(\Omega) \\
\textrm{ and } \\
h_\eps(x) \equiv 1 \textrm{ in } \R^N. 
\end{gathered}
\end{equation}

Then, there exists $u^*: \R \times \R^N \mapsto \R$ with $u^*(x,t) \equiv 0$ in $\R^N \setminus \Omega$ and  
$u^* \in C^1([a,b], L^2(\R^N))$ for any closed interval $[a,b] \subset \R$,  such that, as $\eps \to 0$, 
$$
u^\eps \rightharpoonup u^* \textrm{ weakly$^*$ in } L^\infty([a,b] ; L^2(\Omega)).
$$

Furthermore, we have that limit function $u^*$ satisfies the following nonlocal equation in $\Omega$
\begin{eqnarray} 
u_t(x,t) & = & \mathcal{X}(x) \int_{\R^N} J(x-y) \, \left( u (y,t) - u(x,t) \right) dy  \\ 
& & \quad \quad \quad + \mathcal{X}(x) \, f_{\mathcal{X}}(x,u(x,t)) + (\mathcal{X}(x) - 1) \, u(x,t)   
\end{eqnarray}
with
\begin{equation} 
\begin{gathered}
u(x,t) \equiv 0 \quad \textrm{ in } x \in \R^N \setminus \Omega \textrm{ and } t>0 \\
u(x,0) = \mathcal{X}(x) \, u_0(x)
\end{gathered}
\end{equation}
where
\begin{equation} \label{fX}
f_{\mathcal{X}}(x,u) = (g \circ m_{\mathcal{X}})(x,u)
\end{equation}
and 
\begin{equation} \label{mX}
m_{\mathcal{X}}(x, u) = \frac{1}{\int_{B_\delta (x)} \mathcal{X}(y) \, dy} \int_{B_\delta (x)} u(y) \, dy.
\end{equation}
\end{theorem}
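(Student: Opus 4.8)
The plan is to extend every $u^\eps$ by zero outside $\Omega^\eps$, rewrite the Dirichlet problem as a single equation on $\R^N$ with $\chi_\eps$ appearing as a multiplier, extract a weak$^*$ limit, and pass to the limit by exploiting the compactness of the nonlocal operators. First I would record the uniform bound on $\{u^\eps\}$ in $L^\infty([a,b];L^2(\R^N))$ coming from the existence theory of Section \ref{existence}, together with a uniform bound on $\partial_t u^\eps$ read off directly from the equation (since $g$ is globally Lipschitz and $J\in L^1$ with $\int J=1$). By Banach--Alaoglu this produces a subsequence with $u^\eps \rightharpoonup u^*$ weakly$^*$ in $L^\infty([a,b];L^2(\Omega))$; uniqueness of the limit problem, proved at the end, will upgrade this to convergence of the whole family.

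Since $u^\eps\equiv 0$ outside $\Omega^\eps$ we have $\chi_\eps u^\eps = u^\eps$ and $\int_{\R^N\setminus A^\eps}J(x-y)u^\eps(y,t)\,dy = (J*u^\eps)(x,t)$, while $\partial_t u^\eps\equiv 0$ on $A^\eps$. Hence the Dirichlet problem is equivalent, on all of $\R^N$, to
\[
\partial_t u^\eps = \chi_\eps\,(J*u^\eps) - u^\eps + \chi_\eps\, f(\cdot,u^\eps),
\]
where, using \eqref{177} to keep the denominator positive,
\[
f(x,u^\eps)=g\!\left(m_{\Omega^\eps}(x,u^\eps)\right),\qquad
m_{\Omega^\eps}(x,u^\eps)=\frac{\int_{B_\delta(x)}u^\eps(y)\,dy}{\int_{B_\delta(x)}\chi_\eps(y)\,dy}.
\]
I would test this identity against $\varphi(x)\psi(t)$ with $\varphi\in L^2(\Omega)\cap L^\infty(\Omega)$ and $\psi\in C^1([a,b])$ with $\psi(b)=0$, integrating by parts in $t$ so that the time-derivative term only involves $u^\eps$ (weakly$^*$ convergent) and produces the initial datum.

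The heart of the argument is to upgrade weak to strong convergence of the nonlocal quantities, which is exactly the smoothing feature of a continuous kernel. From Young's inequality $\|(J(\cdot+\xi)-J)*w\|_{L^2}\le \|J(\cdot+\xi)-J\|_{L^1}\|w\|_{L^2}$ and continuity of translation in $L^1$ one gets spatial equicontinuity of $J*u^\eps$ uniform in $\eps$ and $t$, while $\partial_t(J*u^\eps)=J*\partial_t u^\eps$ gives uniform Lipschitz control in $t$; by Riesz--Kolmogorov together with Arzel\`a--Ascoli, $J*u^\eps\to J*u^*$ strongly in $L^2([a,b]\times\Omega)$. The identical averaging estimate shows $\int_{B_\delta(x)}u^\eps\to\int_{B_\delta(x)}u^*$ strongly, and since $\int_{B_\delta(x)}\chi_\eps\,dy\to\int_{B_\delta(x)}\mathcal{X}\,dy\ge C_0>0$ by \eqref{chic} and \eqref{177}, I obtain $m_{\Omega^\eps}(\cdot,u^\eps)\to m_{\mathcal{X}}(\cdot,u^*)$ strongly, with $m_{\mathcal{X}}$ as in \eqref{mX}; the global Lipschitz continuity of $g$ then promotes this to $f(\cdot,u^\eps)\to f_{\mathcal{X}}(\cdot,u^*)$ strongly, with $f_{\mathcal{X}}$ as in \eqref{fX}. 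I expect this strong-convergence step to be the main obstacle, since without it the oscillating factor $\chi_\eps$ cannot be combined with the nonlocal terms.

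Finally I would pass to the limit in each term using the weak$^*$--strong product rule: if $a_\eps\rightharpoonup a$ weakly$^*$ in $L^\infty$ and $b_\eps\to b$ strongly in $L^2$, then $\int a_\eps b_\eps\varphi\to\int ab\,\varphi$. Applied with $a_\eps=\chi_\eps\rightharpoonup\mathcal{X}$ this yields $\chi_\eps(J*u^\eps)\rightharpoonup \mathcal{X}(J*u^*)$ and $\chi_\eps f(\cdot,u^\eps)\rightharpoonup \mathcal{X} f_{\mathcal{X}}(\cdot,u^*)$, while $u^\eps\rightharpoonup u^*$, so the limit identity is
\[
\partial_t u^* = \mathcal{X}\,(J*u^*) - u^* + \mathcal{X}\, f_{\mathcal{X}}(\cdot,u^*).
\]
Writing $\mathcal{X}(J*u^*)=\mathcal{X}\int_{\R^N} J(x-y)(u^*(y)-u^*(x))\,dy+\mathcal{X}\,u^*$ (using $\int_{\R^N}J=1$) and regrouping the term $(\mathcal{X}-1)u^*$ gives precisely the stated equation. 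The initial condition $u^*(\cdot,0)=\mathcal{X}\,u_0$ follows because the zero-extended data equal $\chi_\eps u_0\rightharpoonup \mathcal{X}u_0$, recovered from the boundary term of the time integration by parts. Uniqueness for the limit problem (the same fixed-point and Gronwall scheme as in Section \ref{existence}) forces convergence of the entire family, and the regularity $u^*\in C^1([a,b];L^2(\R^N))$ is read off from the limit equation, whose right-hand side is continuous in $t$ with values in $L^2$.
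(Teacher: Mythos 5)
Your proposal is correct, and it reaches the limit equation by a genuinely different technical route than the paper. The paper never works with the differential (distributional) formulation: it passes to the limit in the variation-of-constants formula \eqref{nonlinear_VCF_G_Lipschitz}, testing it against $\varphi\in L^2(\Omega)$ and splitting into three terms $I_1^\eps, I_2^\eps, I_3^\eps$; the strong convergence of the convolved quantities $\mathcal{S}_\eps(\cdot,t)=\int_0^t e^{-h_\eps(\cdot)(t-s)}(J*\tilde u^\eps)(\cdot,s)\,ds$ and of $m_{\Omega^\eps}(\cdot,\tilde u^\eps)$ is obtained from Proposition \ref{propL} (quoted from \cite{mcp}) together with dominated convergence and the fact that weak convergence plus norm convergence implies strong convergence in the Hilbert space $L^2(\Omega)$, for each fixed $t$. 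You instead test the equation itself against $\varphi(x)\psi(t)$ with an integration by parts in time, and prove the strong compactness of $J*u^\eps$ and of the ball averages from scratch via Young's inequality, $L^1$-continuity of translations, Riesz--Kolmogorov and Arzel\`a--Ascoli; this requires the uniform bound on $\partial_t u^\eps$ read off from the equation, a step the paper never needs because the mild formula absorbs the time derivative. What each approach buys: the paper's mild-formula route delivers the integral representation \eqref{exu*} directly, from which $u^*\in C^1([0,T];L^2)$ and the initial datum $\mathcal{X}u_0$ are immediate, whereas in your route the $C^1$ regularity needs a short bootstrap (the distributional identity gives a $W^{1,\infty}([a,b];L^2)$ representative, hence continuity of $u^*$, hence continuity in $t$ of the right-hand side, hence $C^1$) which you only sketch; conversely, your self-contained compactness argument replaces the external citation of Proposition \ref{propL}, and your uniqueness step is actually simpler than the paper's: since the limit vector field $u\mapsto \mathcal{X}(J*u)-u+\mathcal{X}f_{\mathcal{X}}(\cdot,u)$ is globally Lipschitz on $L^2(\Omega)$ (the denominator in $m_{\mathcal{X}}$ being bounded below by \eqref{177}), Gronwall applies directly, while the paper rescales time by $t=\mathcal{X}^{-1}(x)\tau$ to invoke Remark \ref{reE}. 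Both arguments share the same mathematical core --- the smoothing of the nonlocal operators upgrades weak to strong convergence, which is exactly what allows the oscillating factor $\chi_\eps\rightharpoonup\mathcal{X}$ to be multiplied against the nonlocal terms --- and your final algebraic regrouping using $\int_{\R^N}J=1$ to produce the drag term $(\mathcal{X}-1)u$ matches the statement.
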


Concerning to Neumann conditions on the holes $A^\eps$ we have:
\begin{theorem} \label{theoN}
Let $\{ u^\eps \}_{\eps>0}$ be the family of solutions given by \eqref{pintro} with  
\begin{equation} \label{Ncond}
\begin{gathered}
u^\eps(x,t) \equiv 0 \quad \textrm{ for all } x \in \R^N \setminus \Omega \textrm{ and } t > 0 \\
u^\eps(x,0) = u_0(x)  \textrm{ in } L^2(\Omega) \\
\textrm{ and } \\
h_\eps(x) = \int_{\R^N \setminus A^\eps} J(x-y) \, dy  \quad  \textrm{ for } x \in \R^N.
\end{gathered}
\end{equation}

Then, there exists $u^*: \R \times \R^N \mapsto \R$ with $u^*(x,t) \equiv 0$ in $\R^N \setminus \Omega$ and  
$u^* \in \mathcal{C}^1([a,b], L^2(\R^N))$ for any closed interval $[a,b] \subset \R$,  such that, as $\eps \to 0$, 
$$
\tilde u^\eps \rightharpoonup u^* \textrm{ weakly$^*$ in } L^\infty([a,b] ; L^2(\Omega))
$$
where $\, \tilde \cdot \,$ denotes the extension by zero of functions defined on subsets of $\R^N$. 

Furthermore, we have the limit function $u^*$ satisfies the following nonlocal equation in $\Omega$
\begin{equation} \label{715}
\begin{gathered}
u_t(x,t) =  \displaystyle \mathcal{X}(x) \int_{\R^N} J(x-y)  \left( u (y,t) - u(x,t) \right) dy  \\
\qquad \qquad \qquad \qquad \qquad \qquad \displaystyle + \mathcal{X}(x) \, f_\mathcal{X}(x,u(x,t)) - \Lambda(x) \, u(x,t)  
\end{gathered}
\end{equation} 
with
\begin{equation} 
\begin{gathered}
u(x,t) \equiv 0 \quad \textrm{ for } x \in \R^N \setminus \Omega \textrm{ and } t>0 \\
u(0,x) = \mathcal{X}(x) \, u_0(x)
\end{gathered} 
\end{equation}
where the coefficient $\Lambda \in L^\infty(\R^N)$ is given by
$$
\Lambda(x) =  \int_{\R^N} J(x-y) \, ( 1- \chi_\Omega(y) + \mathcal{X}(y) ) \, dy - \mathcal{X}(x)
$$
and
$f_{\mathcal{X}} = g \circ m_{\mathcal{X}}$
with 
$m_{\mathcal{X}}$ defined by \eqref{mX}.

\end{theorem}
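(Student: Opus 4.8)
The plan is to argue exactly as one would for Theorem \ref{theoD}, the only structural change being the Neumann coefficient $h_\eps$. The scheme has two stages: extract a weak$^*$ limit from the uniform a priori bounds of Section \ref{existence}, and then pass to the limit term by term. The one genuinely delicate point is that several terms are products of two $\eps$-dependent factors, so the convergence of one factor must be upgraded from weak to strong. Throughout I work with the zero extensions $\tilde u^\eps$, which are supported in $\overline{\Omega^\eps}$ and hence satisfy the pointwise identity $\chi_\eps \, \tilde u^\eps = \tilde u^\eps$; this identity is the main algebraic tool.

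First I would rewrite \eqref{pintro} for $\tilde u^\eps$. Since $\tilde u^\eps$ vanishes on $A^\eps$ and on $\R^N \setminus \Omega$, the gain integral is a genuine convolution, $\int_{\R^N \setminus A^\eps} J(x-y) u^\eps(y,t)\, dy = (J * \tilde u^\eps)(x,t)$, and, writing $\chi_{A^\eps} = \chi_\Omega - \chi_\eps$, the Neumann coefficient reads $h_\eps(x) = \int_{\R^N} J(x-y)(1 - \chi_\Omega(y) + \chi_\eps(y))\, dy$. Multiplying the equation by $\chi_\eps$ and using that $\tilde u^\eps_t$ is supported where $\chi_\eps = 1$ (together with $\chi_\eps\tilde u^\eps=\tilde u^\eps$ in the loss term), I obtain on all of $\Omega$ the identity
\[
\tilde u^\eps_t = \chi_\eps \,(J * \tilde u^\eps) - h_\eps \, \tilde u^\eps + \chi_\eps \, f(\cdot, \tilde u^\eps),
\]
which is the form I will pass to the limit. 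The bounds of Section \ref{existence} give $\{u^\eps\}$ and $\{u^\eps_t\}$ bounded in $L^\infty([a,b];L^2(\Omega))$, so Banach--Alaoglu yields a subsequence with $\tilde u^\eps \rightharpoonup u^*$ weakly$^*$ there and $\tilde u^\eps_t \rightharpoonup u^*_t$.

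The identification of the three limits I would handle as follows. By \eqref{chic} and the uniform continuity of $J$, the coefficients $h_\eps$ converge uniformly to $h(x) = \int_{\R^N} J(x-y)(1 - \chi_\Omega(y) + \mathcal{X}(y))\, dy$, so $h_\eps \tilde u^\eps \rightharpoonup h\, u^*$ by a weak--strong product. For the gain term the crucial observation is that $\{J * \tilde u^\eps\}$ is equicontinuous in $x$ (since $\|J(x_1-\cdot) - J(x_2-\cdot)\|_{L^2}$ furnishes a modulus of continuity) and, thanks to the uniform bound on $\tilde u^\eps_t$, equicontinuous in $t$; Arzel\`a--Ascoli together with the pointwise limit forces $J * \tilde u^\eps \to J * u^*$ strongly (uniformly on $[a,b]\times\overline\Omega$), whence $\chi_\eps (J*\tilde u^\eps) \rightharpoonup \mathcal{X}\,(J * u^*)$. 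Finally, using $\chi_\eps \tilde u^\eps = \tilde u^\eps$, the numerator of $m_{\Omega^\eps}(x,\tilde u^\eps)$ equals $\int_{B_\delta(x)} \tilde u^\eps\, dy$, while the denominator $\int_{B_\delta(x)} \chi_\eps\, dy \to \int_{B_\delta(x)} \mathcal{X}\, dy$ by \eqref{chic} and stays bounded below by $C_0$ thanks to \eqref{177}; hence $m_{\Omega^\eps}(\cdot,\tilde u^\eps) \to m_{\mathcal{X}}(\cdot,u^*)$ in the sense of \eqref{mX} and, $g$ being Lipschitz, $f(\cdot,\tilde u^\eps) \to f_{\mathcal{X}}(\cdot,u^*)$ strongly, so $\chi_\eps f(\cdot,\tilde u^\eps) \rightharpoonup \mathcal{X}\, f_{\mathcal{X}}(\cdot,u^*)$.

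Passing to the limit gives $u^*_t = \mathcal{X}\,(J*u^*) - h\,u^* + \mathcal{X}\, f_{\mathcal{X}}(\cdot,u^*)$; recalling that $J$ has unit mass and that $\Lambda = h - \mathcal{X}$, this rearranges exactly into \eqref{715}. The initial datum follows from $\tilde u^\eps(\cdot,0) = \chi_\eps u_0 \rightharpoonup \mathcal{X}\, u_0$, identified with $u^*(\cdot,0)$ via the $C^1$-in-time regularity of the limit, which itself comes from reading \eqref{715} as an ODE in $L^2$ with globally Lipschitz right-hand side. Uniqueness for that well-posed problem then promotes the subsequential convergence to convergence of the whole family. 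I expect the main obstacle to be precisely the compactness step: turning the weak$^*$ convergence of $\tilde u^\eps$ into strong convergence of the convolution and the averaging terms, so that the products against $\chi_\eps$ pass to the limit. Everything else is bookkeeping built on the identity $\chi_\eps\tilde u^\eps=\tilde u^\eps$ and the lower bound \eqref{177}.
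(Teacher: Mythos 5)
Your argument is correct and reaches the same limit equation, but it takes a genuinely different route from the paper's. The paper never passes to the limit in the differential form of \eqref{pintro}: it works with the variational form of the variation-of-constants formula \eqref{nonlinear_VCF_G_Lipschitz}, splitting $\int_\Omega \varphi\, \tilde u^\eps\, dx$ into three terms $I_1^\eps, I_2^\eps, I_3^\eps$, and upgrades weak to strong $L^2$ convergence of the time-integrated quantity $\mathcal{S}_\eps(x,t)=\int_0^t e^{-h_\eps(x)(t-s)}\int_{\R^N}J(x-y)\tilde u^\eps(y,s)\,dy\,ds$ via Proposition \ref{propL} together with two applications of dominated convergence; since time there appears only under an integral, the weak$^*$ convergence in $L^\infty([a,b];L^2(\Omega))$ can be tested directly against kernels like $e^{-h_0(x)(t-s)}J(x-y)$, and no bound on $u^\eps_t$ is ever used. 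You instead pass to the limit in the pointwise identity $\tilde u^\eps_t=\chi_\eps(J*\tilde u^\eps)-h_\eps\tilde u^\eps+\chi_\eps f(\cdot,\tilde u^\eps)$, which forces you to control $\tilde u^\eps_t$ --- note this bound is not in Section \ref{existence} as you claim, though it does follow in one line from the equation and the $L^2$ bound of Proposition \ref{exist} --- and to gain compactness by Arzel\`a--Ascoli. One caution on your phrase ``together with the pointwise limit'': weak$^*$ convergence in $L^\infty([a,b];L^2(\Omega))$ does \emph{not} give convergence of $(J*\tilde u^\eps)(x,t)$ at a fixed $t$, so identifying the Arzel\`a--Ascoli limit with $J*u^*$ requires testing against products $\varphi(x)\psi(t)$ and using the symmetry of $J$ via Fubini; it is precisely your equicontinuity in $t$ that then legitimizes the fixed-time statements, including the identification of the initial datum at $t=0$, and the same care is needed for the averages $m_{\Omega^\eps}$. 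For uniqueness you read \eqref{715} as an ODE in $L^2(\Omega)$ with globally Lipschitz right-hand side (valid since $\mathcal{X}\in L^\infty$ and, by \eqref{177}, $m_\mathcal{X}$ is bounded on $L^2$), which is actually simpler than the paper's device of rescaling time by $t=\mathcal{X}(x)^{-1}\tau$ so as to invoke Remark \ref{reE}. In exchange for the extra derivative bound, your route delivers strictly stronger intermediate conclusions (uniform convergence of the convolution and of the averages, fixed-time weak convergence), whereas the paper's mild-formulation route is more economical, resting only on Proposition \ref{exist} and the single convergence lemma Proposition \ref{propL}; the remaining ingredients --- $h_\eps\to h_0$ uniformly from \eqref{chic}, the identity $\chi_\eps\tilde u^\eps=\tilde u^\eps$, the lower bound \eqref{177} for the denominators, and the rearrangement $\Lambda=h_0-\mathcal{X}$ using $\int_{\R^N} J=1$ --- coincide in both arguments.
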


We point out the dependence of both limit equations on the term $\mathcal{X}$.
They establish the effect of the holes in the original equation \eqref{pintro}.
Indeed, a kind of friction or drag coefficient is obtained, as well as, a new reaction nonlinearity, both caused by the perforations.  
Also, if we rewrite the more involved term $\Lambda$ appearing in Theorem \ref{theoN} as 
$$
\Lambda(x) =  \int_{\R^N \setminus \Omega} J(x-y) \, dy + \int_{\R^N} J(x-y) 
\left( \mathcal{X}(y) - \mathcal{X}(x) \right) dy, \quad x \in \Omega, 
$$
we see that the kernel $J$ explicitly affects the limit equation for the Neumann problem.
As we can see, such dependence on the kernel $J$ does not occur in the Dirichlet problem where the coefficient only depends on the perturbations via $\mathcal{X}$.

Concerning to the extreme case $\mathcal{X}(x) \equiv 1$ in $\Omega$, we can argue as in \cite[Corollary 3.1]{mcp} to see that, 
if $\mathcal{X}(x) \equiv 1$ in $\Omega$, then the limit equation for both conditions is the nonlocal Dirichlet problem in $\Omega$, namely
\begin{equation} \label{DLP}
\begin{gathered}
u_t(t,x) = \int_{\R^N} J(x-y) (u(t,y) - u(t,x)) dy + \hat f(x,u(x,t)), \\
u(0,x) = u_0(x),
\end{gathered} 
\qquad x \in \Omega, \; t \in \R, 
\end{equation}
with 
$u(t,x) \equiv 0$ in $x \in \R^N \setminus \Omega$
and 
$\hat f:\Omega \times L^1(\R^N) \mapsto \R$
defined by 
 $$
\hat f(x,u) = (g \circ m_{B_\delta})(x,u)
 $$
where $m_{B_\delta}$ is the average of $u$ on the ball $B_\delta(x)$ 
\begin{equation} \label{m_delta}
m_{B_\delta}(x,u) = \frac{1}{|B_\delta(x)|} \int_{B_\delta(x)} u(y) \, dy.
\end{equation}
Hence, we can say that small holes do not make any effect on the limit process.  

Finally, we notice the degenerated case $\mathcal{X}(x) \equiv 0$ in $\R^N$ is not considered here, since we work under condition \eqref{chib}. It is a subject of a forthcoming paper.

See that the solutions of the limit problems given by Theorems \ref{theoD} and \ref{theoN} depend on the parameter $\delta>0$ which sets the average function $m_{\mathcal{X}}$ defined at \eqref{mX}.
Here, we also analyse the asymptotic behavior of these equations as $\delta \to 0$ under the additional conditions $J$, $\mathcal{X}$ and $u_0$ of class $\mathcal{C}^1$ and $g$ of class $\mathcal{C}^2$.
It is necessary to guarantee strong convergence in $L^2(\Omega)$ since we do not have regularizing effect for these nonlocal equations.

\begin{theorem} \label{theo_delta}
Let $\{ u^\delta \}_{\delta>0}$ be the family of solutions given by  
\begin{equation}\label{prob_delta_0}
\begin{gathered}
u^\delta_t(x,t) = \mathcal{X}(x) \int_{\R^N} J(x-y) \, u^\delta (y,t) \, dy  - h_0(x) \, u^\delta(x,t) + \mathcal{X}(x) \, f_\mathcal{X}(x, u^\delta(x,t)),   \\
u^\delta (x,t) \equiv 0 \quad \textrm{ for } x \in \R^N \setminus \Omega \\
u^\delta(0,x) = \mathcal{X}(x) \, u_0(x)
\end{gathered} 
\end{equation}
under conditions $J$, $\mathcal{X}$ and $u_0$ of class $\mathcal{C}^1$, $g$ of class $\mathcal{C}^2$, 
\begin{equation} \label{Dcond}
\begin{gathered}
u^\delta(x,t) \equiv 0 \quad \textrm{ for all } x \in \R^N \setminus \Omega \textrm{ and } t > 0 \\
u^\delta(x,0) =\mathcal{X}(x) u_0(x)  \textrm{ in } L^2(\Omega) \\
\textrm{ with } \\
h_0(x) \equiv 1 \textrm{ in } \R^N  \quad \textrm{ for the Dirichlet problem}\\
\textrm{ and } \\ h_0(x)=\int_{\R^N}J(x-y)(1-\chi_\Omega(y) + \mathcal{X}(y))\,dy  \quad \textrm{ for the Neumann problem. } 
\end{gathered}
\end{equation}

Then, there exists $\bar{u}: \R \times \R^N \mapsto \R$ with $\bar{u}(x,t) \equiv 0$ in $\R^N \setminus \Omega$ and  
$\bar{u} \in C^1([a,b], L^2(\R^N))$ for any closed interval $[a,b] \subset \R$,  such that, 
$$
u^\delta \rightharpoonup \bar{u} \textrm{ weakly$^*$ in } L^\infty([a,b] ; L^2(\Omega)) ~~\textrm{ as }~ \delta \to 0.
$$

Furthermore, we have the limit function $\bar{u}$ satisfies the following nonlocal equation in $\Omega$
\begin{eqnarray} \label{1067}
{u}_t(x,t) & = & \mathcal{X}(x) \int_{\R^N} J(x-y) {u} (y,t) dy -h_0(x){u}(x,t) +\mathcal{X}(x) \, g\left(\frac{1}{\mathcal{X}(x)}{u}(x,t)\right)
\end{eqnarray}
with
${u}(x,t) \equiv 0$ in $x \in \R^N \setminus \Omega$ and 
${u}(x,0) = \mathcal{X}(x) \, u_0(x)$.
\end{theorem}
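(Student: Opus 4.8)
The plan is to combine a compactness argument, which produces the weak$^*$ limit, with a uniform-in-$\delta$ spatial regularity estimate, which upgrades the convergence to a strong one in $L^2(\Omega)$ and thereby lets us pass to the limit inside the nonlinearity. The extra hypotheses ($J,\mathcal{X},u_0\in\mathcal{C}^1$ and $g\in\mathcal{C}^2$) enter precisely at this second stage, since the nonlocal operator produces no smoothing and the only available source of spatial compactness is the regularity propagated from the data. First I would reproduce, uniformly in $\delta$, the a priori estimates of Section \ref{existence}: because $g$ is globally Lipschitz and $\mathcal{X},h_0\in L^\infty$, testing \eqref{prob_delta_0} against $u^\delta$ and applying Gronwall gives a bound for $u^\delta$ in $L^\infty([a,b];L^2(\Omega))$ independent of $\delta$, and a bound for $u^\delta_t$ follows directly from the equation. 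In particular the family is bounded in $L^\infty([a,b];L^2(\Omega))$, so along a subsequence $u^\delta\rightharpoonup\bar u$ weakly$^*$, with $\bar u$ vanishing outside $\Omega$.

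The heart of the proof is the strong convergence. The key observation is that, although $m_\mathcal{X}(x,\cdot)$ averages over $B_\delta(x)$, its $x$-derivative stays bounded as $\delta\to0$: writing $\partial_{x_i}\int_{B_\delta(x)}u=\int_{B_\delta(x)}\partial_{y_i}u$ and differentiating the quotient in \eqref{mX}, the factors $|B_\delta(x)|$ cancel, so $\nabla_x m_\mathcal{X}(x,u^\delta)$ is controlled by $\|\nabla\mathcal{X}\|_\infty$ and $\|\nabla_x u^\delta\|$ uniformly in $\delta$. Differentiating \eqref{prob_delta_0} in $x$ (where $J,\mathcal{X}\in\mathcal{C}^1$ and $g\in\mathcal{C}^2$ are needed so that $g'(m_\mathcal{X})\,\nabla_x m_\mathcal{X}$ is meaningful and $g'$ is Lipschitz) and running Gronwall once more yields a bound for $\nabla_x u^\delta$ in $L^\infty([a,b];L^2(\Omega))$, uniform in $\delta$. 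Combining this $H^1$-in-space bound, which is compact in $L^2(\Omega)$ by Rellich, with the equicontinuity in $t$ coming from the $u^\delta_t$ estimate, an Aubin--Lions / Arzel\`a--Ascoli argument upgrades the convergence to $u^\delta\to\bar u$ strongly in $\mathcal{C}([a,b];L^2(\Omega))$.

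Finally I would pass to the limit in the weak formulation of \eqref{prob_delta_0}. The two linear terms pass immediately: after Fubini the convolution term tests $u^\delta$ against the fixed function $y\mapsto\int J(x-y)\mathcal{X}(x)\varphi(x)\,dx\in L^2$, and weak$^*$ convergence handles both it and the $h_0u^\delta$ term. For the nonlinear term I would split $m_\mathcal{X}(x,u^\delta)-\bar u(x)/\mathcal{X}(x)$ as $[m_\mathcal{X}(x,u^\delta)-m_\mathcal{X}(x,\bar u)]+[m_\mathcal{X}(x,\bar u)-\bar u(x)/\mathcal{X}(x)]$: the first bracket tends to $0$ because the averaging operator is a contraction on $L^2$ and $u^\delta\to\bar u$ strongly, while the second tends to $0$ by the Lebesgue differentiation theorem, uniformly thanks to $\bar u,\mathcal{X}\in\mathcal{C}^1$ and $\mathcal{X}\ge c>0$. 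Continuity of $g$ then gives $g(m_\mathcal{X}(x,u^\delta))\to g(\bar u/\mathcal{X})$, so that $\bar u$ solves \eqref{1067}. A uniqueness argument for the limit problem, again via Gronwall, shows that the whole family converges, and not merely a subsequence.

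The main obstacle is the second stage: one must verify that the spatial regularity is genuinely uniform in $\delta$, that is, that differentiating the $\delta$-dependent average does not generate constants blowing up as $\delta\to0$. This is exactly where the $\mathcal{C}^1$ assumptions on the data and the $\mathcal{C}^2$ assumption on $g$ are indispensable, and it is the precise content of the authors' remark that these nonlocal equations possess no regularizing effect.
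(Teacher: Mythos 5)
Your proposal follows essentially the same route as the paper: uniform-in-$\delta$ $L^2$ bounds giving a weak$^*$ limit, a uniform $H^1$ estimate obtained by differentiation (the paper differentiates the variation-of-constants formula rather than the equation itself, but controls the $\delta$-dependent averages exactly as you indicate, with the Hardy--Littlewood maximal inequality ensuring no constant blows up as $\delta\to0$, this being the content of Corollary \ref{cordelta}), then strong $L^2$ compactness and the Lebesgue differentiation theorem to identify the local nonlinearity $g\left(\mathcal{X}^{-1}\bar u\right)$, followed by a uniqueness argument for the limit problem. One small repair: the limit $\bar u$ is only in $H^1$, not $\mathcal{C}^1$, so your second bracket $m_\mathcal{X}(x,\bar u)-\bar u(x)/\mathcal{X}(x)$ tends to zero almost everywhere rather than uniformly (and the averaging operator is uniformly bounded on $L^2$, with constant $C_{2,N}/c$ possibly exceeding one, rather than a contraction), but dominated convergence via the maximal-function bound closes this gap exactly as in the paper.
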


As expected, a local reaction term is obtained at the limit. The effect of the perforations can be seen, and a nontrivial term is captured. As in the previous results, under small perforations, that is, assuming $\mathcal{X}(x) \equiv 1$ in $\Omega$, a nonlocal Dirichlet equations in $\Omega$ is obtained for both Dirichlet and Neumann problems with a local reaction just set by function $g$. Under this additional condition Theorem \ref{theo_delta} implies the following limit equation 
\begin{equation} \label{DLP}
\begin{gathered}
u_t(t,x) = \int_{\R^N} J(x-y) (u(t,y) - u(t,x)) dy + g(u(x,t)), \\
u(0,x) = u_0(x),
\end{gathered} 
\qquad x \in \Omega, \; t \in \R, 
\end{equation}
with 
$u(t,x) \equiv 0$ in $x \in \R^N \setminus \Omega$.

\subsection*{Purely periodic perforations}

The study of solutions in periodic perforated domains has attracted much interest.
For local operators, from pioneering works to recent ones,
 we may mention \cite{ADMET, CJM, IP, CEW, DAPGR, GMM, N, RT, ESP} and references therein that are
concerned with elliptic and parabolic equations, nonlinear operators, as well as Stokes and Navier-Stokes equations from fluid mechanics. 
For instance, in the classical paper \cite{CM}, the authors analyze the Dirichlet problem for the Laplacian in a bounded domain 
from where a big number of periodic small balls are removed.  
They consider 
$$\Omega^\eps = \Omega \setminus \cup_i B_{r^\eps} (x_i)$$
where $B_{r^\eps} (x_i)$ is a ball centered 
in $x_i\in \Omega$ of the form $x_i \in 2 \eps \Z^N$ with radius $0 < r^\eps < \eps \leq 1$. 
Figure \ref{fig1} bellow ilustrates a periodic perforated domain $\Omega^\eps$. 

\begin{figure}[htp] 
\centering \scalebox{0.4}{\includegraphics{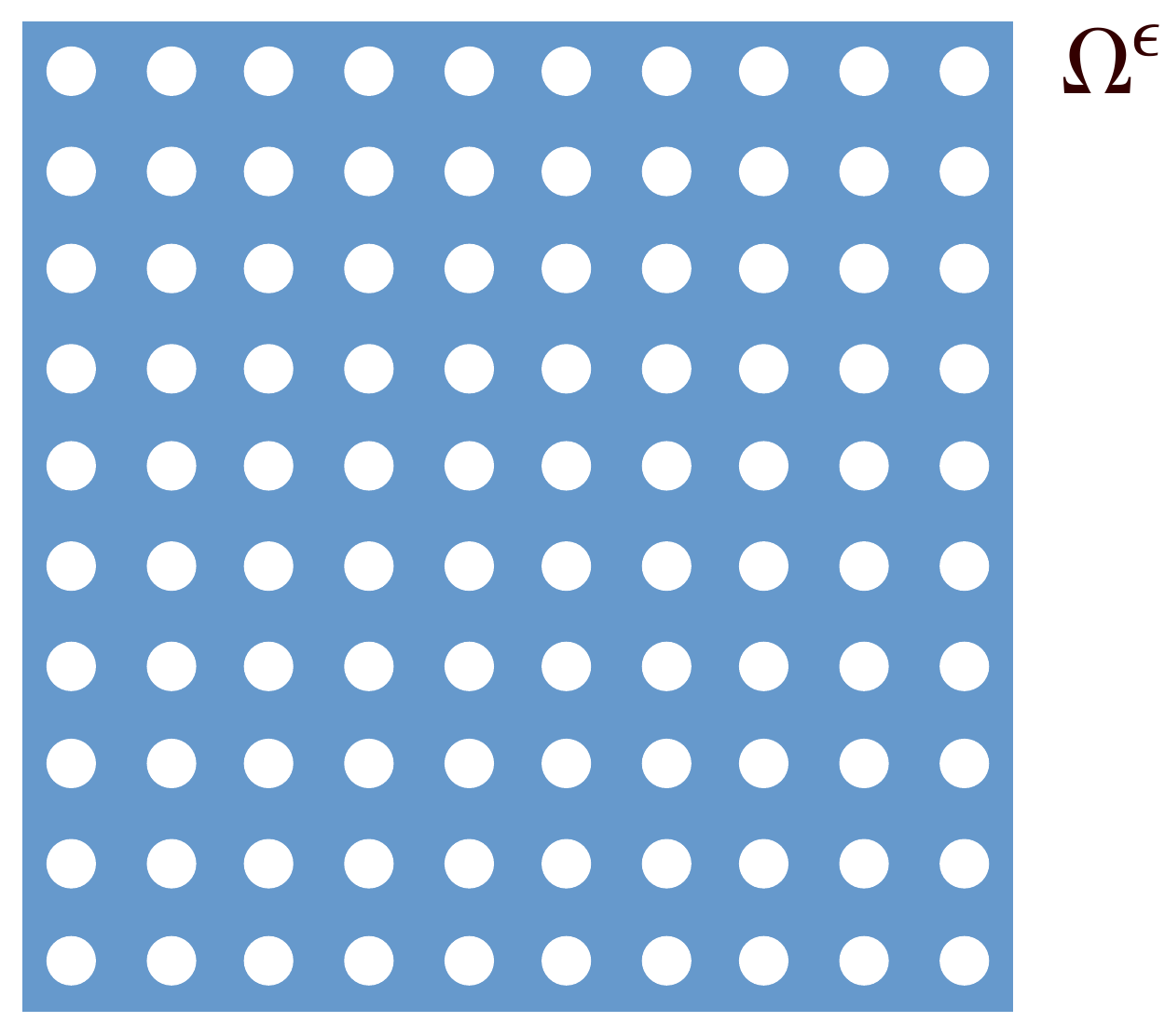}}
\caption{A periodic perforated domain $\Omega^\eps= (0,1)^2  \setminus \cup B_{r^\eps} (x_i)$.}
\label{fig1} 
\end{figure}

In this periodic case, it is known, see for instance \cite[Section 4]{nosotros}, that 
$$\chi_\eps \rightharpoonup \mathcal{X} = |Q\setminus B| /|Q| \quad \textrm{ as } \eps \to 0$$ 
where here, $Q$ denotes the unit cube and $B$ is a ball inside the cube. 
See that $\mathcal{X}$ is a positive constant. 
Thus, due to Theorems \ref{theoD} and \ref{theoN}, we have that the homogenized equations associated to \eqref{pintro} under Dirichlet and Neumann conditions are respectively:  
$$
\rho u_t(x,t) = \int_{\R^N} J(x-y) \left( u (y,t) - u(x,t) \right) dy + (1 - \rho)  u(x,t) + g\left( \rho \, m_{B_\delta}(x, u(\cdot,t) ) \right) 
$$
and  
\begin{eqnarray*}
\rho u_t(x,t) & = & \int_{\R^N} J(x-y) \left( u (y,t) - u(x,t) \right) dy  \\ 
& &  - \int_{\R^N \setminus \Omega} J(x-y) dy 
+ g\left( \rho \, m_{B_\delta}(x, u(\cdot,t) ) \right)
\end{eqnarray*}
for $x \in \Omega$, $t \in \R$ with $u(x,t) \equiv 0$ in $\R^N \setminus \Omega$ where 
$$\rho = {\mathcal{X}}^{-1} = |Q|/|Q \setminus B|$$ 
is a strictly positive constant larger than one since $|B|>0$ and $m_{B_\delta}$ is the average \eqref{m_delta}.

Notice that even in this classic and standard situation, the nonlinear term of \eqref{pintro} is perturbed in a non trivial way.
Indeed, we have $f_{\mathcal{X}}(\cdot,u) = g( \rho \, m_{B_\delta}(\cdot,u))$.
It is due to the fact that the integral operators considered here do not regularize, and hence solutions $u^\eps$ with initial conditions in $L^2$ are expected to be bounded in $L^2$ but nothing better. 

We can still assume appropriated conditions on $J$, $\mathcal{X}$ and $u_0$ to use Theorem \ref{theo_delta} and pass to the limit in the previous equations as $\delta \to 0$ obtaining a local reaction term, also depending on $\rho$ and given by 
$$
g( \rho \, u(\cdot,t) ).
$$



\section{Existence and uniform boundedness} \label{existence}

In this section, we mainly prove existence and uniqueness of the solutions to problem 
\eqref{pintro} giving uniform bounds with respect to parameters $\eps$ and $\delta>0$. 
We also introduce a technical result concerning to the convergence of integral expressions under sequence of functions.

Let us consider here $B=\R^N\setminus A^\eps$ and $u(x)\equiv 0$ in $x\in \R^N\setminus \Omega$ for the Neumann problem and $B=\R^N$ and $u(x)\equiv 0$ in $x\in \R^N\setminus \Omega^\eps$  for the Dirichlet problem. Since we are assuming $J \in \mathcal{C}(\R^N,\R)$, we have that the operator  
\begin{equation} \label{Kop}
Ku(x)=\int_{B}J(x-y)u(y)dy.
\end{equation}
satisfies $K:L^2(\Omega)\to L^2(\Omega)$. On the other hand, it follows from \eqref{177} that 
$$a^{\eps}(x)=\frac{1}{| B_\delta (x) \cap \Omega^\eps |}$$ is uniformly bounded in $\eps$. Indeed, $a^{\eps}\in L^{\infty}(\Omega)$ and satisfies 
\begin{equation}\label{cota_a_x}
	0\le a^{\eps}(x)\le \frac{1}{C_0},\quad\forall 
	x\!\in\!\Omega \textrm{ and } \eps \in (0,\eps_0).
\end{equation}

We are first interested in the Nemitcky 
operator associated to $f$, given by
\begin{equation} \label{mapF}
F:L^2(\Omega^\eps)\rightarrow L^2(\Omega^\eps)\quad \mbox{ with } \quad F(u)(x)=f(x,u)=g(m_{\Omega^\eps}(x,u)).
\end{equation}
To study the properties of $F$, we first see $M u(x)=m_{\Omega^\eps}(x,u)$. In the following lemma we state that  the Nemitcky operator $ M $ 
associated to $m_{\Omega^\eps}$ 
is continuous, globally Lipschitz and compact. For a proof,  see \cite{Zabreico_76}.

\begin{lemma}\label{f_react_dif_locally_lipschitz}
	Let $(\Omega, \mu, d)$ be a metric measure space with $\mu
	(\Omega)<\infty$, and set the operator 
	\begin{equation}\label{nemitck_op_M_definition}
		\displaystyle M (u)(x)=a_{\eps}(x)\int_{B_{\delta}(x)\cap \Omega^\eps}u(y)dy.
	\end{equation}
	
	Since the function $a_{\eps}\in L^{\infty}(\Omega)$ satisfies \eqref{cota_a_x}, we have that $M \in
	\mathcal{L}(L^1(\Omega), L^{\infty}(\Omega))$ 
	with $M : L^2(\Omega)\rightarrow L^{2}(\Omega)$ being a compact operator.
	\end{lemma}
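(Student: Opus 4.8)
The plan is to prove the two assertions in turn: first the continuity $M\in\mathcal{L}(L^1(\Omega),L^\infty(\Omega))$, which follows from a direct pointwise estimate, and then the compactness of $M$ on $L^2(\Omega)$, which I would deduce by recognizing $M$ as a Hilbert--Schmidt integral operator. Linearity of $M$ is immediate from its definition. For the first claim, I would use the uniform bound \eqref{cota_a_x} on $a_\eps$: for $u\in L^1(\Omega)$ and almost every $x\in\Omega$,
$$
|M(u)(x)| = a_\eps(x)\left|\int_{B_\delta(x)\cap\Omega^\eps} u(y)\,dy\right| \le \frac{1}{C_0}\int_\Omega |u(y)|\,dy = \frac{1}{C_0}\|u\|_{L^1(\Omega)},
$$
after enlarging the domain of integration to $\Omega$ and bounding $a_\eps$ by $1/C_0$. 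Taking the essential supremum over $x$ yields $\|M(u)\|_{L^\infty(\Omega)}\le C_0^{-1}\|u\|_{L^1(\Omega)}$, so that $M\in\mathcal{L}(L^1(\Omega),L^\infty(\Omega))$.

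Since $\mu(\Omega)<\infty$, the inclusions $L^2(\Omega)\hookrightarrow L^1(\Omega)$ and $L^\infty(\Omega)\hookrightarrow L^2(\Omega)$ are continuous, so composing these with the bound above already shows $M:L^2(\Omega)\to L^2(\Omega)$ is well defined and bounded. To obtain compactness I would exhibit $M$ as an integral operator with kernel
$$
K(x,y) = a_\eps(x)\,\chi_{\Omega^\eps}(y)\,\chi_{B_\delta(x)}(y),
$$
so that $M(u)(x)=\int_\Omega K(x,y)\,u(y)\,dy$, using $\Omega^\eps\subset\Omega$. From \eqref{cota_a_x} one has $0\le K(x,y)\le 1/C_0$ everywhere, and since $\mu(\Omega\times\Omega)=\mu(\Omega)^2<\infty$ it follows that
$$
\|K\|_{L^2(\Omega\times\Omega)}^2 = \int_\Omega\int_\Omega K(x,y)^2\,dy\,dx \le \frac{\mu(\Omega)^2}{C_0^2} < \infty.
$$
Hence $K$ is square integrable on $\Omega\times\Omega$, and $M$ is a Hilbert--Schmidt operator on $L^2(\Omega)$, which is therefore compact.

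No step in this argument is genuinely delicate; the two points deserving care are that the passage from the pointwise $L^\infty$ bound to a bounded map on $L^2$ uses the finiteness of $\mu(\Omega)$ in an essential way, and that the kernel $K$ is automatically square integrable precisely because it is bounded (by $1/C_0$) over a domain of finite measure. An alternative route to compactness would be to verify, via the Fr\'echet--Kolmogorov criterion, that $M$ maps bounded sets of $L^2(\Omega)$ into sets that are equicontinuous under translations; however, this requires controlling translates of $\chi_{B_\delta(x)}$ and is less transparent than the Hilbert--Schmidt argument, which I would prefer.
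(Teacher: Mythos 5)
Your proof is correct, but note that the paper does not actually prove this lemma: it delegates the proof entirely to the monograph of Krasnosel'ski\u{i}--Zabreiko--Pustyl'nik--Sobolevski\u{i} \cite{Zabreico_76}, where such operators are handled by the general theory of integral operators between $L^p$ spaces. Your argument is therefore a self-contained, elementary replacement rather than a reproduction of the paper's route. The $L^1\to L^\infty$ bound via $0\le a_\eps\le 1/C_0$ from \eqref{cota_a_x} is exactly right, and your key step --- writing $M$ as the integral operator with kernel $K(x,y)=a_\eps(x)\,\chi_{\Omega^\eps}(y)\,\chi_{B_\delta(x)}(y)$, which is bounded by $1/C_0$ and hence square-integrable over the finite-measure product space, so that $M$ is Hilbert--Schmidt and thus compact on $L^2(\Omega)$ --- is the cleanest elementary way to get compactness. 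Two refinements are worth recording. First, you should note joint measurability of $K$: it holds because $\{(x,y)\in\Omega\times\Omega : d(x,y)<\delta\}$ is open, and this also guarantees measurability of $M(u)$ via Fubini, a point your pointwise estimate tacitly uses. Second, you were right not to attempt compactness from the factorization $L^2\hookrightarrow L^1\xrightarrow{\;M\;}L^\infty\hookrightarrow L^2$ alone, since the embedding $L^\infty\hookrightarrow L^2$ is not compact on a finite measure space (consider a sequence of Rademacher-type functions); and your closing remark about the Fr\'echet--Kolmogorov criterion is in fact moot here, because the lemma is stated on an abstract metric measure space where translations are unavailable --- so among elementary arguments the Hilbert--Schmidt route is not merely preferable but essentially forced.
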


\begin{remark} \label{Cconst}
In particular, if the nonlinear function $g:\R\to\R$ is globally Lipschitz, then the 
Nemitcky operator associated to $g$ and set by $G:L^2(\Omega)\rightarrow L^2(\Omega)$ is also
globally Lipschitz. Hence, since $M$ is a bounded operator by Lemma \ref{f_react_dif_locally_lipschitz}, we get $F=G\circ M:L^2(\Omega)\to L^2(\Omega)$ globally Lipschitz. 
Then, there exists a constant $C>0$, independent of $\eps>0$ such that 
$$
\| F(u) - F(v) \|_{L^2(\Omega)} \leq C \| u-v\|_{L^2(\Omega)}.
$$ 
\end{remark}

In the following Proposition, 
we prove the existence and uniqueness of the solutions to \eqref
{pintro} and we give a uniform bound of $u^\eps$ with respect to $\eps>0$.
\begin{prop}\label{exist}
The problem  \eqref{pintro}, under the assumptions
	\begin{equation*} 
\begin{gathered}
u^\eps(x,t) \equiv 0 \quad \textrm{ for all } x \in \R^N \setminus \Omega \textrm{ and } t > 0 \quad \textrm{ and } \\
h_\eps(x) = \int_{\R^N \setminus A^\eps} J(x-y) \, dy  \quad  \textrm{ for } x \in \R^N,
\end{gathered}
\end{equation*} 
	for the Neumann problem, or  
	\begin{equation*} 
\begin{gathered}
u^\eps(x,t) \equiv 0 \quad \textrm{ for all } x \in \R^N \setminus \Omega^\eps \textrm{ and } t > 0 \quad \textrm{ and } \\
h_\eps(x) \equiv 1 \textrm{ in } \R^N, 
\end{gathered}
\end{equation*}
	for the Dirichlet problem,
	has a unique global solution $u^\eps: \Omega \times \R \to \R$ with 
	$$u^\eps \in \mathcal{C}^1([a,b], L^2(\Omega^{\eps}))$$ 
	for every $u_0\in L^2(\Omega)$ and any bounded interval $[a,b] \subset \R$, with
	\begin{equation}\label{nonlinear_VCF_G_Lipschitz}
		u^{\eps}(\cdot,t)=e^{-h_{\eps}(x)t}u_0+\displaystyle\int_0^t e^{-h_{\eps}(x)(t-s)}[Ku+F (u^{\eps})](\cdot,s)
		\,ds \quad \forall t \in \R.
	\end{equation}
	
	Moreover,  there exists constants $\alpha$ and $D$, with $D>0$, independent of $\eps$ and 
	$t$, such that 
	\begin{equation}\label{unif_bound} 
	\|u^{\eps}(\cdot,t)\|_{L^2(\Omega^\eps)}\le e^{\alpha t}\left[\|u_0\|_{L^2(\Omega)} 
	+Dt\right].
	\end{equation}
\end{prop}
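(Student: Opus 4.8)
The plan is to recast \eqref{pintro} as an abstract ordinary differential equation in the Hilbert space $X = L^2(\Omega^\eps)$ (with functions understood as extended by zero outside $\Omega^\eps$ in the Dirichlet case, and outside $\Omega$ in the Neumann case) and to solve it through the variation of constants formula \eqref{nonlinear_VCF_G_Lipschitz} combined with a Banach fixed point argument. Writing the equation as $u_t = -h_\eps u + Ku + F(u)$, I would treat multiplication by $-h_\eps$ as the diagonal linear part: since $h_\eps \in L^\infty(\R^N)$, it generates the explicit multiplication semigroup $(S(t)v)(x) = e^{-h_\eps(x)t}v(x)$ on $X$, while $K$ (bounded on $L^2$ by \eqref{Kop}) and $F$ (globally Lipschitz on $L^2$ with an $\eps$-independent constant $C$ by Lemma \ref{f_react_dif_locally_lipschitz} and Remark \ref{Cconst}) are kept as forcing terms. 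The crucial sign remark is that $h_\eps \ge 0$ in both the Dirichlet case ($h_\eps \equiv 1$) and the Neumann case ($h_\eps(x) = \int_{\R^N \setminus A^\eps} J(x-y)\,dy \ge 0$), so that $0 \le e^{-h_\eps(x)t} \le 1$ for $t \ge 0$.

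Next I would define, on $C([0,T];X)$ with the sup norm, the map
$$
(\Phi u)(t) = e^{-h_\eps(\cdot)t}u_0 + \int_0^t e^{-h_\eps(\cdot)(t-s)}\big[Ku(\cdot,s) + F(u(\cdot,s))\big]\,ds,
$$
which is exactly the right-hand side of \eqref{nonlinear_VCF_G_Lipschitz}. Using $0 \le e^{-h_\eps(t-s)} \le 1$ for $0 \le s \le t$, together with $\|K\|_{\mathcal L(X)} \le \|J\|_{L^1(\R^N)} = 1$ (Young's convolution inequality) and the global Lipschitz constant $C$ of $F$ from Remark \ref{Cconst}, a direct estimate gives $\|\Phi u - \Phi v\|_{C([0,T];X)} \le T(1+C)\|u - v\|_{C([0,T];X)}$. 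Choosing $T$ with $T(1+C) < 1$ makes $\Phi$ a contraction and yields a unique local solution; since $1+C$ depends neither on the size of $u_0$ nor on the initial time, the existence time is uniform and the solution extends to a global one on all of $\R$ (the backward direction being identical up to replacing the contraction factor by $e^{\|h_\eps\|_\infty T}$). The resulting $u^\eps$ is automatically of class $C^1$ in $t$ because the map $u \mapsto -h_\eps u + Ku + F(u)$ is continuous from $X$ to $X$, so $t \mapsto u_t(\cdot,t)$ is continuous.

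For the uniform bound \eqref{unif_bound} I would take the $X$-norm of \eqref{nonlinear_VCF_G_Lipschitz}, again use $0 \le e^{-h_\eps t} \le 1$, and bound $\|Ku(s)\|_X \le \|u(s)\|_X$ and $\|F(u(s))\|_X \le C\|u(s)\|_X + \|F(0)\|_X$. The point is that $F(0)(x) = g(m_{\Omega^\eps}(x,0)) = g(0)$ is constant, hence $\|F(0)\|_X = |g(0)|\,|\Omega^\eps|^{1/2} \le |g(0)|\,|\Omega|^{1/2} =: D$, independent of $\eps$. This produces the integral inequality
$$
\|u^\eps(\cdot,t)\|_X \le \|u_0\|_X + Dt + (1+C)\int_0^t \|u^\eps(\cdot,s)\|_X\,ds,
$$
and Gronwall's inequality, in the form with nondecreasing forcing $\|u_0\|_X + Dt$, yields $\|u^\eps(\cdot,t)\|_X \le e^{\alpha t}(\|u_0\|_X + Dt)$ with $\alpha = 1+C$, which is precisely \eqref{unif_bound} (using $\|u_0\|_{L^2(\Omega^\eps)} \le \|u_0\|_{L^2(\Omega)}$).

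The main obstacle is not the fixed point scheme itself, which is routine once the multiplication semigroup is identified, but rather ensuring that every constant — $\|K\|$, the Lipschitz constant of $F$, and $\|F(0)\|$ — is genuinely uniform in $\eps$ (and in $\delta$). The Lipschitz uniformity of $F$ rests on the lower bound \eqref{177} for $|B_\delta(x) \cap \Omega^\eps|$, which keeps the averaging weights $a^\eps$ controlled as in \eqref{cota_a_x} and is exactly what Lemma \ref{f_react_dif_locally_lipschitz} and Remark \ref{Cconst} encapsulate; without \eqref{chib} the nonlinearity could degenerate as the holes shrink. A secondary technical care is the bookkeeping between the two functional settings (zero extension outside $\Omega^\eps$ for Dirichlet versus outside $\Omega$ for Neumann), but in both cases the sign condition $h_\eps \ge 0$ holds and makes $S(t)$ a contraction for $t\ge 0$, so the estimates carry over verbatim.
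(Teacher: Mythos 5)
Your proposal is correct, and it splits naturally into a part that matches the paper and a part that takes a genuinely different route. The existence--uniqueness half is essentially the paper's argument: the same Banach fixed point on the variation of constants formula \eqref{nonlinear_VCF_G_Lipschitz} in $\mathcal{C}([-T,T],L^2)$ with $T$ independent of the data, followed by prolongation; the only cosmetic difference is that the paper upgrades the mild solution to a $\mathcal{C}^1$ strong solution by invoking \cite[p.~109]{Pazy}, whereas you get it directly from the observation that the full right-hand side $u \mapsto -h_\eps u + Ku + F(u)$ is a globally Lipschitz \emph{bounded} map on $L^2$ (legitimate here precisely because the linear part is multiplication by $h_\eps \in L^\infty$, so mild and classical solutions coincide). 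The a priori bound \eqref{unif_bound} is where you diverge: the paper differentiates $\tfrac{1}{2}\|u^\eps(\cdot,t)\|_{L^2}^2$, symmetrizes the linear terms into $-\tfrac{1}{2}\int_B\int_B J(x-y)(u^\eps(y,t)-u^\eps(x,t))^2\,dy\,dx$ --- an identity that holds exactly because $h_\eps(x)=\int_B J(x-y)\,dy$ in both settings --- controls this by the first nonlocal eigenvalue $\lambda_1^\eps$, and closes with Young's inequality; the $\eps$-uniformity of its exponent then hinges on a lower bound for $\lambda_1^\eps$ imported from \cite{nosotros}, which for the Neumann problem even needs the additional covering condition recorded in the paper's subsequent remark. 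You instead estimate the Duhamel formula directly, using only $h_\eps \ge 0$ (so $0 \le e^{-h_\eps t} \le 1$ for $t\ge 0$), $\|K\|_{\mathcal{L}(L^2)} \le \|J\|_{L^1}=1$ by Young's convolution inequality, and $F(0) \equiv g(0)$ so $\|F(0)\|_{L^2} \le |g(0)|\,|\Omega|^{1/2}$, closing with Gronwall with nondecreasing forcing. This is more elementary and fully self-contained: it bypasses $\lambda_1^\eps$ and the extra geometric hypothesis altogether, and it makes the $\eps$- and $\delta$-independence of the constants transparent ($\alpha = 1+C$, $D = |g(0)|\,|\Omega|^{1/2}$, with $C$ uniform via \eqref{177} and \eqref{cota_a_x}, exactly as you say). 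The trade-off is sharpness: your exponent is always positive, while the paper's energy method can produce a negative rate when $\lambda_1^\eps$ dominates the Lipschitz constant of the nonlinearity --- relevant for long-time decay, but not needed for the proposition as stated, which only asks for some $\alpha$ independent of $\eps$ and $t$.
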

\begin{proof}
The existence and uniqueness result is proved using a fixed point argument with the variations of constants formula on the right of \eqref{nonlinear_VCF_G_Lipschitz} in 
$\mathcal{C}([-T, T], L^2(\Omega))$ for some $T>0$ independent of the initial data, and with a 
prolongation argument. Considering the formula on the right of \eqref{nonlinear_VCF_G_Lipschitz} 
as an operator defined 
from $L^{1}([-T, T], L^2(\Omega))$ into $\mathcal{C}([-T, T], L^2(\Omega))$, we have the uniqueness in both spaces and applying Theorem in \cite[p. 109]{Pazy} we obtain that $u$ is a strong solution of  (\ref{pintro}) in $\mathcal{C}^1([-T, T], L^2(\Omega))$.

To prove the uniform bound, we consider $B=\R^N\setminus A^\eps$ for the Neumann problem 
and $B=\R^N$ for the Dirichlet problem.
\[
\begin{array}{ll}
\displaystyle{d\over dt}&\displaystyle {1\over 2}\displaystyle \|u^\eps(\cdot,t)\|^2_{L^2(\Omega^\eps)} 
={d\over dt}{1\over 2}\int_{B} (u^\eps(x,t))^2dx=\int_{B} u^\eps(x,t) u_t^\eps(x,t)dx
\smallskip\\
&\displaystyle=\int_{B} u^\eps(x,t) \left[\int_{B} J(x-y) u^\eps(y,t) \, dy - h_\eps(x) \, u^\eps(x,t)  + f(x, u^\eps(x,t))\right]dx
\smallskip\\
&\displaystyle=-{1\over 2}\int_{B} \int_{B} J(x-y)( u^\eps(y,t)- u^\eps(x,t) )^2\,dy\,dx + \int_{B}u^\eps(x,t)f(x, u^\eps(x,t))\,dx
\end{array}
\]
Considering 
\[
\lambda_1^\eps=
\displaystyle \inf\limits_{u\in W} {\displaystyle{1\over 2}\int_{B} \int_{B} J(x-y)( u^\eps(y,t)- u^\eps(x,t) )^2\,dy\,dx\over \displaystyle \int_{B}\left(u^\eps(x,t)\right)^2\,dx} \]
where $W=\{u\in L^2(\R^N\!\setminus\! A^\eps): u(x)\equiv 0 \, \forall x\in \R^N\setminus \Omega\}$ for the Neumann problem, and $W=\{u\in L^2(\R^N): u(x)\equiv 0 \, \forall x\in \R^N\setminus \Omega^{\eps}\}$ for the Dirichlet problem. 
Thanks to Young's inequality and Remark \ref{Cconst}, since $g$ is globally Lipschitz, we have that 
\[
\begin{array}{ll}
\displaystyle{d\over dt} {1\over 2}\|u^\eps(\cdot,t)\|^2_{L^2(\Omega^\eps)} 
&\displaystyle\le  (\eta^2-\lambda^\eps_1)\int_{B} ( u^\eps(x,t) )^2\,dx + \eta^{-2}\|f(\cdot, u^\eps(\cdot,t))\|^2_{L^2(B)}
\smallskip\\&\displaystyle
\le (\eta^2-\lambda^\eps_1+\eta^{-2}C)\|u^\eps(\cdot,t)\|^2_{L^2(\Omega^\eps)} +|\Omega||g(0)| 
\end{array}
\]
for any $\eta>0$. Therefore integrating in $[0,t]$, we conclude 
\begin{equation}\label{bound}
\|u^{\eps}(\cdot,t)\|_{L^2(\Omega^\eps)}\le e^{2(\eta^2-\lambda^\eps_1+\eta^{-2}C) t}\left[\|u_0\|_{L^2(\Omega)} 
	+|\Omega||g(0)|t\right]
\end{equation}
finishing the proof.
\end{proof}

\begin{remark}
The term $\lambda_1^\eps$ is the first eigenvalue of 
\[
\int_B J(x-y)u^\eps(y)dy - h_\eps(x) \, u^\eps(x) - \lambda_1^{\eps}u^\eps(x)=0.
\]
From \cite{nosotros}, we know that the family $\lambda^\eps_1$ is lower bounded for both the Dirichlet and Neumann problems. For the Neumann problem, it is obtained under the additional condition: 
\begin{center}
There exists  finite family of sets $B_0, B_1, \dots, B_L\subset \R^N\setminus A^\eps$\\
such that $B_0=\R^N\setminus \Omega$, 
\[\R^N\setminus A^\eps\subset\bigcup_{i=0}^L B_i \quad \mbox{and}\quad \alpha_j={1\over 4}\min\limits_{x\in B_j}\int_{B_{J-1}}J(x-y)dy>0.\]
\end{center}
\end{remark}

Now let us study the uniform boundness with respect to $\delta$ of the solutions of the limit problems introduced by Theorems \ref{theoD} and \ref{theoN}. 
\begin{prop}\label{exist_delta}

	Let  $K\in\mathcal{L}(L^2(\Omega), L^2(\Omega))$ as in \eqref{Kop}, $g:\R\to \R$ globally Lipchitz and consider the map $F_{\mathcal{X}}: L^2(\Omega) \mapsto L^2(\Omega)$ set by 
	$F_{\mathcal{X}}(u)(x) =f_{\mathcal{X}}(x,u(x,t))=g(m_{\mathcal{X}}(x,u))$ where the functions $f_{\mathcal{X}}$ and $m_{\mathcal{X}}$ are given by \eqref{fX} and \eqref{mX}.
	
	Then $F_{\mathcal{X}}$ 
	is globally Lipschitz, and the problem  \eqref{prob_delta_0} has a unique global solution 
	$u^\delta: \Omega \times \R \to \R$ with 
	$$u^\delta \in \mathcal{C}^1([a,b], L^2(\Omega))$$ 
	for every $u_0\in L^2(\Omega)$, and any bounded interval $[a,b] \subset \R$, with
	\begin{equation} \label{sol_u_delta}
\begin{array}{l}
\displaystyle u^\delta(x,t) = e^{-h_0(x)t} \, \mathcal{X}(x) \, u_0(x) + \int_0^t e^{-h_0(x)(t-s)} \, \mathcal{X}(x) \, f_\mathcal{X}(x, u^\delta(x,s)) \, ds \\[10pt] 
\qquad \qquad \qquad \qquad \displaystyle + \int_0^t e^{-h_0(x)(t-s)} \, \mathcal{X}(x) \int_{\R^N} J(x-y) \, u^\delta(y,s) \, dy\, ds. 
\end{array}
\end{equation}

	Moreover,  there exist constants $\alpha$ and $D$, with $D>0$, independent of $\delta$ and 
	$t$, such that 
	\begin{equation}\label{unif_bound} 
	\|u^{\delta}(\cdot,t)\|_{L^2(\Omega)}\le e^{\alpha t}\left[\|u_0\|_{L^2(\Omega)} 
	+Dt\right].
	\end{equation}
\end{prop}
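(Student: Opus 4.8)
The plan is to follow the scheme of Proposition \ref{exist} almost verbatim, treating the multiplication operator $u\mapsto -h_0(x)\,u$ as the bounded generator of the $C_0$-semigroup $(e^{-h_0(x)t})_{t\ge 0}$ on $L^2(\Omega)$ -- note that $h_0\in L^\infty(\R^N)$ and $h_0\ge 0$ in both cases, so $0\le e^{-h_0(x)t}\le 1$ -- and viewing $u\mapsto \mathcal{X}(x)\,Ku+\mathcal{X}(x)\,F_{\mathcal{X}}(u)$ as a globally Lipschitz perturbation. To establish the global Lipschitz continuity of $F_{\mathcal{X}}$, write $M_{\mathcal{X}}u(x)=m_{\mathcal{X}}(x,u)$ as in \eqref{mX}; this is a linear integral operator whose denominator $\int_{B_\delta(x)}\mathcal{X}$ is bounded below through \eqref{chib} and \eqref{177}, so exactly as in Lemma \ref{f_react_dif_locally_lipschitz} one obtains $M_{\mathcal{X}}\in\mathcal{L}(L^2(\Omega),L^2(\Omega))$. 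Since $g$ is globally Lipschitz, $F_{\mathcal{X}}=g\circ M_{\mathcal{X}}$ is globally Lipschitz with constant $\mathrm{Lip}(g)\,\|M_{\mathcal{X}}\|$, precisely as in Remark \ref{Cconst}.

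With $K\in\mathcal{L}(L^2(\Omega),L^2(\Omega))$ from \eqref{Kop} and multiplication by $\mathcal{X}\in L^\infty$ bounded, the whole right-hand side map is globally Lipschitz on $L^2(\Omega)$. Existence and uniqueness of a local mild solution satisfying the Duhamel identity \eqref{sol_u_delta} then follow from the Banach fixed point theorem applied to that formula in $\mathcal{C}([-T,T],L^2(\Omega))$ for some $T>0$ independent of the initial data; global existence follows by the usual continuation argument, and the regularity $u^\delta\in\mathcal{C}^1([a,b],L^2(\Omega))$ from the strong-solution theorem in \cite[p.\,109]{Pazy}.

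For the bound \eqref{unif_bound} I would run the energy estimate of Proposition \ref{exist}. Testing \eqref{prob_delta_0} against $u^\delta$ expresses $\tfrac12\tfrac{d}{dt}\|u^\delta\|_{L^2(\Omega)}^2$ as the sum of a diffusion term, the non-positive term $-\int_\Omega h_0\,(u^\delta)^2\le 0$, and the reaction term. Using $0\le\mathcal{X}\le 1$ and $\|K\|_{\mathcal{L}(L^2)}\le 1$ (as $J\ge 0$, $\int_{\R^N}J=1$), the diffusion term is bounded by $\|u^\delta\|^2$; by Young's inequality the reaction term is bounded by $\eta^2\|u^\delta\|^2+\eta^{-2}\|f_{\mathcal{X}}(\cdot,u^\delta)\|^2$, and the Lipschitz bound on $g$ gives $\|f_{\mathcal{X}}(\cdot,u^\delta)\|^2\le 2\,\mathrm{Lip}(g)^2\,\|M_{\mathcal{X}}u^\delta\|^2+2|\Omega|\,|g(0)|^2$. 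Collecting terms and applying Gronwall's inequality exactly as in \eqref{bound}, together with $\|\mathcal{X}u_0\|\le\|u_0\|$, yields \eqref{unif_bound}.

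The only genuinely new point -- and the main obstacle -- is that $\alpha$ and $D$ must be \emph{independent of $\delta$}, which forces an estimate $\|M_{\mathcal{X}}\|_{\mathcal{L}(L^2(\Omega))}\le C$ uniform in $\delta$; the bound coming from \eqref{177} degenerates as $\delta\to 0$, since $\int_{B_\delta(x)}\mathcal{X}\to 0$. To obtain a $\delta$-uniform constant I would factor $M_{\mathcal{X}}$ through the ordinary ball average $\bar A_\delta u(x)=|B_\delta|^{-1}\int_{B_\delta(x)}u$, which is convolution with an $L^1$-normalized kernel and hence satisfies $\|\bar A_\delta\|_{\mathcal{L}(L^2)}\le 1$ uniformly in $\delta$ by Young's convolution inequality. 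Writing $M_{\mathcal{X}}u(x)=\bar A_\delta u(x)/\bar A_\delta\mathcal{X}(x)$ and bounding the denominator below by $c\,|B_\delta(x)\cap\Omega|/|B_\delta|$ via \eqref{chib} reduces everything to the measure-density ratio $\sup_{x\in\Omega,\,\delta\in(0,\delta_0)}|B_\delta|/|B_\delta(x)\cap\Omega|$, which is finite for the bounded (Lipschitz) domains considered. This gives $\|M_{\mathcal{X}}\|\le C/c$ uniformly in $\delta$ and closes the estimate.
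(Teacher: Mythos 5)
Your proposal is correct and reaches all the stated conclusions, but it takes a genuinely different route on the energy estimate. The paper does not test \eqref{prob_delta_0} directly: because of the factor $\mathcal{X}(x)$ multiplying the convolution term, the quadratic form is no longer symmetric, so the authors first rescale time, setting $w^\delta(x,\tau)=u^\delta(x,\mathcal{X}(x)^{-1}\tau)$ (this uses \eqref{chib} to keep $h_0/\mathcal{X}\in L^\infty(\Omega)$), and only then symmetrize, writing the kernel part as $-\frac12\int_\Omega\int_\Omega J(x-y)\,(w(y)-w(x))^2\,dy\,dx$ and estimating it through the first nonlocal eigenvalue $\lambda_1$ of \eqref{lam1}. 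Your direct estimate --- using $h_0\ge 0$ in both the Dirichlet and Neumann cases, $0\le\mathcal{X}\le 1$, and $\|K\|_{\mathcal{L}(L^2)}\le\|J\|_{L^1(\R^N)}=1$ by Young's convolution inequality --- trades the sharper $\lambda_1$-dependent exponent for simplicity; since the proposition only asserts the existence of \emph{some} constants $\alpha,D$ independent of $\delta$ and $t$, this cruder bound is perfectly adequate. The fixed-point, continuation, and Pazy steps coincide with the paper's.

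On the $\delta$-uniformity of the Lipschitz constant of $F_{\mathcal{X}}$ you are in fact more careful than the paper, whose proof simply writes a constant $C$ ``since $g$ is globally Lipschitz'' without checking that it does not degenerate as $\delta\to 0$: the lower bound \eqref{177}, used via \eqref{cota_a_x} in Proposition \ref{exist}, is for fixed $\delta$, and $\int_{B_\delta(x)}\mathcal{X}(y)\,dy\to 0$ as $\delta\to 0$. Your repair --- factoring $M_{\mathcal{X}}$ through the $L^1$-normalized ball average, of $L^2$-norm at most one uniformly in $\delta$, and controlling the ratio $|B_\delta|/|B_\delta(x)\cap\Omega|$ --- is the natural one, but note that it imports a measure-density hypothesis on $\Omega$ (satisfied by Lipschitz or corkscrew domains) that the paper never states: for a general bounded open set with, say, an inward cusp, this ratio is unbounded as $\delta\to0$, and uniform boundedness of the restricted averages can genuinely fail. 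You should therefore state that assumption explicitly; with it, your argument closes and actually documents a step the paper leaves implicit (the constants $C_{2,N}/c$ invoked via the Hardy--Littlewood maximal inequality in the proof of Corollary \ref{cordelta} rest tacitly on the same point).
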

\begin{proof}
Analogously to Proposition \ref{exist}, one can prove existence and uniqueness by fixed point arguments with variations of constants formula on the right of \eqref{sol_u_delta} in $\mathcal{C}([-T, T], L^2(\Omega))$ for any $T>0$ with a prolongation argument. 

To prove the boundness, we consider the problem re-scaling $t$ with $t = \mathcal{X}^{-1}(x) \, \tau$ and setting 
$$
w^\delta(x,\tau) = u^\delta(x, {\mathcal{X}(x)}^{-1} \tau).
$$
We have that $w^\delta$ satisfies the equation
\begin{equation} \label{eq_rescaled}
\begin{gathered}
\displaystyle w^\delta_t(x,t)=\int_{\R^N}J(x-y)\,w^\delta(y,t)dy-{h_0(x)\over \mathcal{X}(x)}w^\delta(x,t)+f_{\mathcal{X}}(x,w^\delta(x,t))  \quad x\in \Omega
\smallskip\\
w^\delta(x,t)\equiv 0 \quad  x\in\R^N\setminus\Omega
\end{gathered}
\end{equation}
 with ${h_0(\cdot)\over \mathcal{X}(\cdot)} \in L^\infty(\Omega)$. 
Let us prove the uniform bound for $w^\delta$.
\[
\begin{array}{ll}
\displaystyle{d\over dt}\displaystyle {1\over 2}\displaystyle \|w^\delta(\cdot,t)\|^2_{L^2(\Omega)} 
&\displaystyle ={d\over dt}{1\over 2}\int_{\Omega} (w^\delta(x,t))^2dx=\int_{\Omega} w^\delta(x,t) w_t^\delta(x,t)dx
\smallskip\\
&\displaystyle=\int_{\Omega} w^\delta(x,t) \left[\int_{\Omega} J(x-y) w^\delta(y,t) \, dy - {h_0(x)\over \mathcal{X}(x)} \, w^\delta(x,t)  + f_{\mathcal{X}}(x, w^\delta(x,t))\right]dx
\smallskip\\
&\displaystyle=-{1\over 2}\!\int_{\Omega} \!\int_{\Omega}\!\!\!\! J(x-y)( w^\delta(y,t)\!-\! w^\delta(x,t) )^2\,dy\,dx\! \\
&\displaystyle+ \int_{\Omega}\!\!\!w^\delta(x,t)\!\left(\!f_{\mathcal{X}}(x, w^\delta(x,t))\!+\!\left(\!\tilde{h}(x)\!-\! {h_0(x)\over \mathcal{X}(x)}\right)w^\delta(x,t) \right)dx
\end{array}
\]
where $\tilde{h}(\cdot)=\int_{\Omega}J(\cdot-y)dy\in L^\infty(\Omega)$. Take  
\begin{equation} \label{lam1}
\lambda_1=
\displaystyle \inf\limits_{w\in W} {\displaystyle{1\over 2}\int_{\Omega} \int_{\Omega} J(x-y)( w(y,t)- w(x,t) )^2\,dy\,dx\over \displaystyle \int_{\Omega}(w(x,t) )^2\,dx}
\end{equation}
where $W=\{w\in L^2(\Omega): w(x)\equiv 0 \, \forall x\in \R^N\setminus \Omega\}$. 
Thanks to Young's inequality and since $g$ is globally Lipschitz, we have that 
\[
\begin{array}{l}
\displaystyle{d\over dt} {1\over 2}\|w^\delta(\cdot,t)\|^2_{L^2(\Omega)} \!\!\!\!

\\ \displaystyle\!\le\!  (\eta^2\!\!-\!\!\lambda_1)\!\int_{\Omega}\!\! ( w^\eps(x,t) )^2dx \!+\! \eta^{-2}\!\left\|f_{\mathcal{X}}(\cdot, w^\delta(\cdot,t))\!+\!\left(\!\tilde{h}(\cdot)\!-\! {h_0(\cdot)\over \mathcal{X}(\cdot)}\right)\!w^\delta(\cdot,t)\right\|^2_{L^2(\Omega)}
\smallskip\\ 
\displaystyle
\!\le\! (\eta^2-\lambda_1+\eta^{-2}C)\|w^\delta(\cdot,t)\|^2_{L^2(\Omega)} +|\Omega||g(0)|. 
\end{array}
\]
Therefore integrating in $[0,t]$, we conclude 
\begin{equation}\label{bound}
\|w^{\delta}(\cdot,t)\|_{L^2(\Omega^\eps)}\le e^{2(\eta^2-\lambda^\delta_1+\eta^{-2}C) t}\left[\|\mathcal{X}u_0\|_{L^2(\Omega)} 
	+|\Omega||g(0)|t\right]
\end{equation}
finishing the proof.
\end{proof}

\begin{remark}
The term $\lambda_1$ introduced in \eqref{lam1} is known as the first eigenvalue of 
\[
\int_\Omega J(x-y)( u(y) - u(x)) \, dy - \lambda_1u(x)=0.
\]
See \cite{ElLibro} for more details. 
\end{remark}

\begin{remark} \label{reE}
Notice that the results of existence and uniqueness stated in Propositions \ref{exist} and \ref{exist_delta} are also valid for the problem 
\begin{equation} \label{reESolution}
\begin{gathered}
\displaystyle\omega_t(x,t)=\int_{\R^N}J(x-y)(\omega(y,t)-\omega(x,t))dy-h_0(x)\omega(x,t)+f_{\mathcal{X}}(x,\omega(x,t))  \quad x\in \Omega
\smallskip\\
\omega(x,t)\equiv 0 \quad  x\in\R^N\setminus\Omega
\end{gathered}
\end{equation}
for any $h_0 \in L^{\infty}(\Omega)$ and $f_{\mathcal{X}}$ given as in Theorems \ref{theoD} and \ref{theoN}.
\end{remark}

\begin{cor} \label{cordelta}
Let us assume under hypotheses of Proposition \ref{exist_delta} the additional conditions: $J$, $\mathcal{X}$ and $u_0$ of class $\mathcal{C}^1$ and $g$ of class $\mathcal{C}^2$ in $\R^N$.
Then the family of solutions $u^\delta(\cdot, t)$ of the problem \eqref{prob_delta_0} given by \eqref{sol_u_delta} belongs to $L^2([a,b], H^1(\Omega))$ and satisfies
	\begin{equation} \label{estH1}
	\|u^{\delta}(\cdot,t)\|_{H^1(\Omega)}\le C e^{\alpha t} \|u_0\|_{H^1(\Omega)}  \quad \forall t \in [a,b],
	\end{equation}
	for constants $\alpha$ and $D$, with $D>0$, independent of $\delta$ and $t$.
\end{cor}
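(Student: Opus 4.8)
The plan is to exploit the mild formulation \eqref{sol_u_delta} together with a translation (finite--difference) argument in the spirit of Nirenberg, rather than differentiating the equation directly. Since the nonlocal operators have no smoothing effect, the $H^1$ regularity cannot be created; it must be transported from the $\mathcal{C}^1$ datum $\mathcal{X}u_0$ along the flow. For $h\in\R^N$ write $\tau_h\phi=\phi(\cdot+h)$ and set $\phi_h(t)=\|\tau_h u^\delta(\cdot,t)-u^\delta(\cdot,t)\|_{L^2(\Omega)}$. Applying $\tau_h$ to \eqref{sol_u_delta}, subtracting, and estimating each term I expect to obtain a Gronwall inequality for $\phi_h$ with $\delta$--independent constants; dividing by $|h|$ and letting $h\to0$ then yields, through the difference--quotient characterization of $H^1$, both membership $u^\delta(\cdot,t)\in H^1(\Omega)$ and the bound \eqref{estH1}, after which the claim $u^\delta\in L^2([a,b],H^1(\Omega))$ follows by integrating in $t$.

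The heart of the matter is to estimate the increments of the three building blocks uniformly in $\delta$. For the linear diffusion term I would keep the translation on $u^\delta$, using $\|J*(\tau_h u^\delta-u^\delta)\|_{L^2}\le\|J\|_{L^1}\phi_h=\phi_h$, which needs nothing beyond $\|J\|_{L^1}=1$. The exponential and initial factors produce only $O(|h|)$ errors: since $h_0\ge0$ and $h_0\in W^{1,\infty}(\Omega)$ (for the Neumann choice $\nabla h_0=(\nabla J)*(1-\chi_\Omega+\mathcal{X})$ is bounded because $J\in\mathcal{C}^1$), one has $\|\tau_h e^{-h_0(\cdot)(t-s)}-e^{-h_0(\cdot)(t-s)}\|_\infty\le|h|(t-s)\|\nabla h_0\|_\infty$, and $\|\tau_h(\mathcal{X}u_0)-\mathcal{X}u_0\|_{L^2}\le|h|\,\|\nabla(\mathcal{X}u_0)\|_{L^2}\le C|h|\,\|u_0\|_{H^1}$ because $\mathcal{X},u_0\in\mathcal{C}^1$. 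The delicate term is the nonlocal average. Writing $m_{\mathcal{X}}(x,u)=(\mathbf 1_{B_\delta}*u)(x)/(\mathbf 1_{B_\delta}*\mathcal{X})(x)$ (all functions extended by zero) and subtracting the two ratios, every increment can be moved onto $u^\delta$ or onto $\mathcal{X}$ while the kernel $\mathbf 1_{B_\delta}$ stays intact; since $\|\mathbf 1_{B_\delta}*\psi\|_{L^2}\le|B_\delta|\,\|\psi\|_{L^2}$ and, by \eqref{chib}, the normalisation satisfies $\mathbf 1_{B_\delta}*\mathcal{X}\gtrsim|B_\delta|$, the factors $|B_\delta|$ cancel and I obtain the $\delta$--uniform bound $\|\tau_h m_{\mathcal{X}}(\cdot,u^\delta)-m_{\mathcal{X}}(\cdot,u^\delta)\|_{L^2}\le C(\phi_h+|h|\,\|u^\delta\|_{L^2})$. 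Composing with $g$, which is globally Lipschitz, and with the $\mathcal{C}^1$ weight $\mathcal{X}$ then controls the increment of the reaction term by the same right--hand side.

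Collecting these estimates in the translated version of \eqref{sol_u_delta} gives, for $t\in[a,b]$,
\[
\phi_h(t)\le C|h|\,\|u_0\|_{H^1(\Omega)}+C\int_0^t\phi_h(s)\,ds+C|h|\int_0^t\|u^\delta(\cdot,s)\|_{L^2(\Omega)}\,ds,
\]
with $C$ independent of $\delta$. The last integral is bounded on $[a,b]$ by the uniform $L^2$ estimate of Proposition \ref{exist_delta}, so Gronwall's lemma yields $\phi_h(t)\le C|h|\,e^{\alpha t}\|u_0\|_{H^1(\Omega)}$. Dividing by $|h|$, taking $h=se_i$ and letting $s\to0$ for each $i$, the difference--quotient criterion shows $u^\delta(\cdot,t)$ has weak derivatives in $L^2(\Omega)$ obeying the same bound; together with the $L^2$ estimate this gives \eqref{estH1} uniformly in $\delta$, and integration in $t$ over $[a,b]$ gives $u^\delta\in L^2([a,b],H^1(\Omega))$.

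The main obstacle is precisely the $\delta$--uniformity of the averaging increment. A direct differentiation of $x\mapsto\int_{B_\delta(x)\cap\Omega}u^\delta$ produces a surface contribution on $\partial\Omega\cap B_\delta(x)$ that scales like $\delta^{-1}$ after normalisation and would destroy the uniform bound; the translation formulation avoids this by never differentiating $\mathbf 1_{B_\delta}$, and the strict positivity \eqref{chib} of $\mathcal{X}$ is what prevents the normalisation $\mathbf 1_{B_\delta}*\mathcal{X}$ from degenerating as $\delta\to0$. I note that for this $H^1$ bound the hypothesis $g\in\mathcal{C}^2$ is not needed: global Lipschitz continuity of $g$ suffices, the $\mathcal{C}^2$ assumption being reserved for the strong convergence as $\delta\to0$.
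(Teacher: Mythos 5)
Your proof is essentially correct at the level of rigor the paper itself operates at, but it takes a genuinely different route. The paper first secures the membership $u^\delta(\cdot,t)\in H^1(\Omega)$ term by term (invoking absolute continuity of $x\mapsto\int_{B_\delta(x)}u^\delta$ via Lebesgue--Radon--Nikodym), then differentiates the mild formula \eqref{sol_u_delta} directly, using the appendix identity \eqref{derivative} to put the derivative inside the ball average; this yields the self-referential expression \eqref{der_u_delta} for $u^\delta_{x_i}$, which is estimated in $L^2$ through H{\"o}lder and the Hardy--Littlewood maximal inequality and closed with Gr{\"o}nwall. Your finite-difference scheme replaces all of this: Gr{\"o}nwall is run on the translation increment $\phi_h$, and membership in $H^1$ \emph{falls out} of the uniform bound via the difference-quotient criterion, so you never need the a priori membership step nor the justification of \eqref{der_u_delta}. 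Two genuine gains: Young's inequality for the normalized kernel $|B_\delta|^{-1}\mathbf{1}_{B_\delta}$ replaces the maximal inequality, and your closing observation is correct --- since you never differentiate through $g$, global Lipschitz continuity of $g$ suffices, whereas the paper's computation uses $g'$ and its Lipschitz constant $L_{g'}$ (this is where its $\mathcal{C}^2$ hypothesis enters; the $\mathcal{C}^2$ assumption is really only needed for the $\delta\to 0$ limit in Theorem \ref{theo_delta}).

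Two caveats, both of which put you exactly even with the paper rather than behind it. First, to close the Gr{\"o}nwall loop for the diffusion term via $\|J*(\tau_h u^\delta-u^\delta)\|_{L^2}\le \|J\|_{L^1}\,\phi_h$, you must define $\phi_h$ with the zero extension over all of $\R^N$; but then the claimed $O(|h|)$ increments of $\mathcal{X}$, $\mathcal{X}u_0$ and of the ratio $(\mathbf{1}_{B_\delta}*u^\delta)/(\mathbf{1}_{B_\delta}*\mathcal{X})$ require Lipschitz regularity of these extensions \emph{across} $\partial\Omega$, which is strictly incompatible with \eqref{chib} together with $\mathcal{X}\equiv 0$ outside $\Omega$ (a discontinuous extension only gives $O(|h|^{1/2})$ increments in $L^2$ through the boundary layer). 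So your remark that the translation formulation ``avoids'' the $\delta^{-1}$ surface contribution is only half-true: it does avoid differentiating $\mathbf{1}_{B_\delta}$, but the layer reappears through $\tau_h$ hitting the discontinuous extensions, and you are rescued by the same formal reading of the hypotheses that rescues the paper's own proof, whose formula \eqref{der_u_delta} likewise treats $\|\partial\mathcal{X}/\partial x_i\|_{L^\infty}$ as finite and places $\partial_i u^\delta$ under $\int_{B_\delta(x)}$ across $\partial\Omega$. Second, minor shared points: boundedness of $\nabla h_0=(\nabla J)*(1-\chi_\Omega+\mathcal{X})$ needs integrability or decay of $\nabla J$, not merely $J\in\mathcal{C}^1$ (the paper is equally cavalier here), and the lower bound $\mathbf{1}_{B_\delta}*\mathcal{X}\gtrsim |B_\delta|$ near $\partial\Omega$ uses $|B_\delta(x)\cap\Omega|\ge\sigma|B_\delta|$, which is harmless since in the difference-quotient argument $\delta$ is fixed while $h\to 0$, so one may take $|h|\le\delta/2$ without damaging the $\delta$-uniformity of the final constants.
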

\begin{proof}
First we notice that under the additional conditions the function $u^\delta(\cdot,t)$ defined by \eqref{sol_u_delta} belongs to $H^1(\Omega)$. In fact, since $J$ is a function of class $\mathcal{C}^1$, we have that $h_0$ is also $\mathcal{C}^1$.  
Consequently, we get 
$e^{-h_0t} \, \mathcal{X} \, u_0$ and  $\int_0^t e^{-h_0(t-s)} \, \mathcal{X} \int_{\R^N} J(\cdot-y) \, u^\delta(y,s) \, dy \,ds$ in $H^1(\Omega)$ for all $t$ since $\mathcal{X}$ and $u_0$ are also of class $\mathcal{C}^1$. 
It remains to show that 
$$
\Phi_t(x) = \int_0^t e^{-h_0(x)(t-s)} \, \mathcal{X}(x) \, f_\mathcal{X}(x, u^\delta(x,s)) \, ds, \quad x \in \Omega, 
$$
belongs to $H^1(\Omega)$ to conclude that $u^\delta(\cdot,t) \in H^1(\Omega)$ for all $t \in \R$. Indeed, we have that $\Phi_t \in H^1(\Omega)$ if and only if $f_\mathcal{X}(\cdot, u^\delta(\cdot,t)) \in H^1(\Omega)$, which is in $H^1(\Omega)$ if and only if, $m_\mathcal{X}(\cdot, u^\delta(\cdot,t)) \in H^1(\Omega)$ since $g$ is a Lipschitz function. 
Then, let us see that $m_\mathcal{X}(\cdot, u^\delta(\cdot,t)) \in H^1(\Omega)$. But, we notice that, this is a direct consequence from \cite[Lebesgue-Radon-Nikodym Theorem]{Rudin}. Since  
$$
m_{\mathcal{X}}(x, u^\delta) = \frac{1}{\int_{B_\delta (x)} \mathcal{X}(y) \, dy} \int_{B_\delta (x)} u^\delta(y) \, dy,
$$
with  $u^\delta \in L^2(\R^N)$ and $\mathcal{X} \in L^\infty(\R^N)$ satisfying $\mathcal{X}(x) \geq c>0$ for all $x \in \Omega$, we have that $m_\mathcal{X}(\cdot, u^\delta(\cdot,t))$ is an absolute continuous function, and then, it belongs to $H^1(\Omega)$.

Next, let us see which is the expression of the partial derivative $u^\delta_{x_i}$ taking into account that $u^\delta$ 
is given by \eqref{sol_u_delta}. By \eqref{derivative} in the appendix, and performing the appropriate computations, we get  
\begin{equation} \label{der_u_delta}
\begin{array}{l}
\displaystyle u_{x_i}^\delta(x,t) \displaystyle = -{\partial h_0\over \partial x_i}(x) t e^{-h_0(x)t} \, \mathcal{X}(x) \, u_0(x)
+ {\partial \mathcal{X}\over \partial x_i}(x)e^{-h_0(x)t} \, u_0(x)
+ {\partial u_0\over \partial x_i}(x)e^{-h_0(x)t} \, \mathcal{X}(x)
\smallskip\\

\displaystyle + \int_0^t \!\!\!\left(\!-{\partial h_0\over \partial x_i}(x)(t-s)\mathcal{X}(x)+{\partial \mathcal{X}\over \partial x_i}(x)\right)\!e^{-h_0(x)(t-s)} \left(\!f_\mathcal{X}(x, u^\delta(x,s)) +\!\!\!\int_{\R^N} \!\!\!\!\!\!J(x-y) u^\delta(y,s) dy\right) ds 
\smallskip\\
 \displaystyle + \int_0^t e^{-h_0(x)(t-s)}\mathcal{X}(x)\int_{\R^N} {\partial J\over\partial x_i}(x-y) \, u^\delta(y,s) dy\, ds
\smallskip\\
\displaystyle + \int_0^t e^{-h_0(x)(t-s)} \mathcal{X}(x) g'(m_{\mathcal{X}}(x, u^\delta(x,s)) 
\left(
{-\int_{B_{\delta}(x)}{\partial\mathcal{X}\over \partial x_i}(y)dy\over \left(\int_{B_{\delta}(x)}\mathcal{X}(y)dy\right)^2}\int_{B_{\delta(x)}}\!\!\!\!\!\!u^\delta(y,s)dy\right)\,ds
\smallskip\\
\displaystyle+ \int_0^t e^{-h_0(x)(t-s)} \mathcal{X}(x) g'(m_{\mathcal{X}}(x, u^\delta(x,s)) 
 \left({1\over \int_{B_{\delta}(x)}\mathcal{X}(y)dy}\int_{B_{\delta(x)}}\!\!\!{\partial u^\delta\over \partial x_i}(y,s)dy\right)\, ds. 
\end{array}
\end{equation}
Now, considering $L^2(\Omega)$ norm on the previous expression, since $h_0, \,\mathcal{X}, \,{1\over \mathcal{X}}$ and ${h_0\over\mathcal{X}}\in W^{1,\infty}(\Omega)$, $\mathcal{X}$ and $u$ satisfy \eqref{chib} and \eqref{unif_bound} respectively, with $\Omega \subset \R^N$ bounded, we obtain from  
H{\"o}lder inequality and Hardy-Littlewood maximal inequality that 
\begin{equation} \label{norm_der_u_delta_1}
\begin{array}{l}
\displaystyle \|u_{x_i}^\delta(\cdot, t)\|_{L^2(\Omega)} \displaystyle \le  \left(\left\|\partial h_0\over \partial x_i\right\|_{L^{\infty}(\Omega)} \|\mathcal{X}\|_{L^{\infty}(\Omega)} t 
+ \left\|{\partial \mathcal{X}\over \partial x_i}\right\|_{L^{\infty}(\Omega)} 
+\|\mathcal{X}\|_{L^{\infty}(\Omega)}\right) \| u_0\|_{H^1(\Omega)} e^{\tilde{\alpha} t} 

\smallskip\\
\displaystyle + \int_0^t \!\!\!\left(\left\|\partial h_0\over \partial x_i\right\|_{L^{\infty}(\Omega)} \|\mathcal{X}\|_{L^{\infty}(\Omega)} (t-s)+ \left\|{\partial \mathcal{X}\over \partial x_i}\right\|_{L^{\infty}(\Omega)}\right) e^{\tilde{\alpha}(t-s)}\!\!\left( (L_g+1)e^{\alpha s}(\|u_0\|_{L^2(\Omega)}+Ds)\right)ds
 
\smallskip\\
 \displaystyle + \int_0^t e^{\tilde{\alpha}(t-s)}\|\mathcal{X}\|_{L^{\infty}(\Omega)}
 \left\|{\partial J\over\partial x_i}\right\|_{L^2(\Omega)} e^{\alpha s}\left(\|u_0\|_{L^2(\Omega)}+Ds\right)ds
 
 \smallskip\\
\displaystyle
+ \int_0^t e^{\tilde{\alpha}(t-s)}\|\mathcal{X}\|_{L^{\infty}(\Omega)}\left(L_{g'}e^{\alpha s}(\|u_0\|_{L^2}+Ds)+|g'(0)||\Omega|^{1/2}\right)\!\!\left(\!\! {C_{2,N} \over c^2}\left\|{\partial \mathcal{X}\over \partial x_i}\right\|_{L^{\infty}(\Omega)}\!\!\!\!\!\!\!\!\!\!\!\!(\|u^{\delta}(\cdot, s)\|_{L^2(\Omega)}\!\!\right)ds
 
\smallskip\\
\displaystyle + \int_0^t e^{\tilde{\alpha}(t-s)}\|\mathcal{X}\|_{L^{\infty}(\Omega)}\left(L_{g'}e^{\alpha s}(\|u_0\|_{L^2}+Ds)+|g'(0)||\Omega|^{1/2}\right)\left( {C_{2,N} \over c} \|u^\delta_{x_i}(\cdot, s)\|_{L^2(\Omega)}\right)ds,
\end{array}
\end{equation}
since $\|e^{h_0(x)t}\|_{L^\infty}\le e^{\tilde{\alpha} t}$ for some constant $\tilde \alpha$, $|\int_{\Omega}J(x-y)u(y)dy|\le \|J\|_{L^{p'}}\|u\|_{L^p}$ in $\R^N$,  and 
$$
\begin{gathered}
\|g'(m_{\mathcal{X}}(\cdot, u^\delta(\cdot,s))\|_{L^2} \le L_{g'}\|u^\delta(\cdot,s)\|_{L^2}+|g'(0)||\Omega|^{1/2} \\
\le L_{g'}e^{\alpha s}(\|u_0\|_{L^2}+Ds)+|g'(0)||\Omega|^{1/2}.
\end{gathered}
$$

Then, for any $T>0$, we have  
\begin{equation} \label{norm_der_u_delta_3}
\begin{gathered}
 \|u_{x_i}^\delta(\cdot, t)\|_{L^2(\Omega)}  \le 
 C_1(h_0,\mathcal{X},J,g,T,D,\Omega)\|u_0\|_{H^1(\Omega)} \\
 \qquad \qquad \qquad \qquad \qquad 
 + C_2(h_0,\mathcal{X},u_0,g,D,\Omega,N,2,c)\int_{0}^t \|u^{\delta}_{x_i}(\cdot, s)\|_{L^2(\Omega)}ds, \quad \forall t \in [0,T].
\end{gathered}
\end{equation}
Thanks to Gr{\"o}nwall's inequality, we obtain
\begin{equation} \label{norm_der_u_delta_3}
 \|u_{x_i}^\delta(\cdot, t)\|_{L^2(\Omega)}  \le \
 C_1(h_0,\mathcal{X},J,g,b,D,\Omega)\|u_0\|_{H^1(\Omega)}e^{C_2(h_0,\mathcal{X},u_0,g,D,\Omega,N,2,c)t}.
\end{equation}
Thus, we can conclude the proof.
\end{proof}

Finally, we would like to present a basic fact that will be need in the sequel. 
The proof may be seen in \cite{mcp}. 
\begin{prop} \label{propL}
Let $\varphi^\eps$ be a sequence in $L^p(\R^N)$ with $1<p\leq \infty$ which vanishes in $\R^N \setminus \Omega$. 
Suppose that, as $\eps \to 0$,   
\begin{equation} \label{eq334}
\begin{gathered}
\varphi^\eps \rightharpoonup \varphi \quad \textrm{ weakly in } L^p(\Omega), \textrm{ as } 1<p<\infty, \\
\textrm{ or } \quad \varphi^\eps \rightharpoonup \varphi \quad \textrm{ weakly$^*$ in } L^\infty(\Omega), \textrm{ as } p=\infty, \\
\end{gathered}
\end{equation}
for some $\varphi$ in $L^p(\R^N)$ also satisfying $\varphi(x)\equiv 0$ in $\R^N \setminus \Omega$.
Then, if $J$ holds hypothesis ${\bf (H_J)}$,
$$
\Phi^\eps(x) = \int_{\R^N} J(x-y) \varphi^\eps(y) \, dy 
 \to  
\Phi_0(x) = \int_{\R^N} J(x-y) \varphi(y) \, dy, \quad \textrm{ as } \eps \to 0,
$$
strongly in $L^p(\mathcal{O})$ for any compact set $\mathcal{O} \subset \R^N$.
\end{prop}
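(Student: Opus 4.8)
The plan is to prove uniform convergence of $\Phi^\eps$ to $\Phi_0$ on the compact set $\mathcal O$ and then to deduce the $L^p(\mathcal O)$ convergence for free, since $\mathcal O$ has finite measure.

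First I would establish pointwise convergence. Fix $x \in \mathcal O$. Because both $\varphi^\eps$ and $\varphi$ vanish outside $\Omega$, we may write $\Phi^\eps(x) - \Phi_0(x) = \int_\Omega J(x-y)\,(\varphi^\eps(y) - \varphi(y))\,dy$. As $x$ ranges over the compact set $\mathcal O$ and $y$ over the bounded set $\Omega$, the argument $x-y$ stays in a fixed compact set on which the continuous kernel $J$ is bounded by ${\bf (H_J)}$; hence $y \mapsto J(x-y)$ belongs to $L^{p'}(\Omega)$, where $p'$ is the conjugate exponent, and here $p' < \infty$ precisely because $p>1$. Testing the weak (resp. weak$^*$ when $p=\infty$) convergence $\varphi^\eps \rightharpoonup \varphi$ against this fixed $L^{p'}(\Omega)$ function yields $\Phi^\eps(x) \to \Phi_0(x)$ for every $x \in \mathcal O$.

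Next I would verify that $\{\Phi^\eps\}$ is uniformly bounded and equicontinuous on $\mathcal O$, so as to invoke Arzel\`a--Ascoli. A weakly convergent sequence is norm bounded by Banach--Steinhaus, so $\|\varphi^\eps\|_{L^p(\Omega)} \le M$ uniformly in $\eps$. H\"older's inequality then gives $|\Phi^\eps(x)| \le \|J(x-\cdot)\|_{L^{p'}(\Omega)}\,M \le CM$ uniformly for $x \in \mathcal O$. For equicontinuity, for $x_1,x_2 \in \mathcal O$ one has
\[
|\Phi^\eps(x_1) - \Phi^\eps(x_2)| \le \|J(x_1 - \cdot) - J(x_2 - \cdot)\|_{L^{p'}(\Omega)}\,M,
\]
and since $J$ is uniformly continuous on the relevant compact set and $\Omega$ is bounded, the right-hand side tends to $0$ as $|x_1-x_2|\to 0$, uniformly in $\eps$. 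By Arzel\`a--Ascoli the family is precompact in $C(\mathcal O)$; since any uniform cluster point must coincide with the pointwise limit $\Phi_0$, a standard subsequence argument forces the whole sequence to converge uniformly on $\mathcal O$ to $\Phi_0$.

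Finally, uniform convergence on the compact, hence finite-measure, set $\mathcal O$ gives the claim at once: $\|\Phi^\eps - \Phi_0\|_{L^p(\mathcal O)} \le |\mathcal O|^{1/p}\,\|\Phi^\eps - \Phi_0\|_{L^\infty(\mathcal O)} \to 0$ for $p<\infty$, while for $p=\infty$ the two norms coincide. The main obstacle is the equicontinuity estimate: the whole scheme rests on upgrading weak convergence to genuine uniform pointwise control, which needs both the uniform norm bound from Banach--Steinhaus and the uniform continuity of $J$ on compacta. It is exactly here that the hypothesis $p>1$ is used, for otherwise $J(x-\cdot)$ would fail to lie in the dual space $L^{p'}(\Omega)$ and the test against the weak limit would be unavailable.
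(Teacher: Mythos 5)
Your proof is correct. Note that the paper itself gives no argument for Proposition \ref{propL}: it defers entirely to \cite{mcp}, where the proof is of the same general flavor as your first step --- pointwise convergence of $\Phi^\eps(x)$ obtained by testing the weak (resp.\ weak$^*$) convergence against the fixed test function $J(x-\cdot)\in L^{p'}(\Omega)$, together with a uniform bound from the boundedness of the weakly convergent sequence --- followed by dominated convergence on the finite-measure set $\mathcal O$ to conclude for $p<\infty$. Your route differs at the second stage: instead of dominating, you extract equicontinuity from the uniform continuity of $J$ on the compact set $\{x-y : x\in\mathcal O,\ y\in\overline{\Omega}\}$ and invoke Arzel\`a--Ascoli to upgrade to uniform convergence on $\mathcal O$. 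This buys a strictly stronger conclusion (convergence in $C(\mathcal O)$) and a single argument valid for every $p\in(1,\infty]$, whereas the dominated-convergence route must handle $p=\infty$ by essentially this equicontinuity device anyway. One streamlining remark: Arzel\`a--Ascoli is dispensable here, since pointwise convergence plus equicontinuity on a compact set already implies uniform convergence, so the subsequence-extraction step can be skipped.

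One claim in your closing sentence is inaccurate, though it does not damage the proof of the proposition as stated: for $p=1$ the relevant dual is $L^\infty(\Omega)$, and the bounded function $J(x-\cdot)$ certainly lies there, so your mechanism would go through verbatim for a weakly convergent sequence in $L^1$. The hypothesis $p>1$ is therefore not ``used exactly here''; it reflects rather how the proposition is applied in the paper (with $p=2$ and $p=\infty$, where bounded sequences admit weakly or weak$^*$ convergent subsequences --- a compactness property unavailable in $L^1$).
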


\section{The limit equations} \label{limit}

In this section we prove Theorems \ref{theoD} and \ref{theoN}.

First we notice that the existence of the family of solutions $u^\eps$ of \eqref{pintro} under conditions \eqref{Dcond} and \eqref{Ncond} are guaranteed by Proposition \ref{exist}. 
Also, there exists a positive constant $C$, 
independent of $\eps>0$, such that, for any bounded interval $[a,b] \subset \R$, 
\begin{equation} \label{Kest0}
\sup_{t \in [a,b]} \| u^\eps(\cdot,t) \|_{L^2(\Omega^\eps)} \leq C.
\end{equation}
Hence, if $\tilde \cdot$ denotes the extension by zero to the whole space $\R^N$, we also get that 
\begin{equation} \label{Kest}
\sup_{t \in [a,b]} \| \tilde u^\eps(\cdot,t) \|_{L^2(\Omega)} \leq C,
\end{equation}
and then, $\tilde u^\eps$ sets a uniformly bounded family in $L^\infty \left( [a,b] ; L^2(\Omega)  \right)$.

Moreover, if $\chi_\eps$ is the characteristic function of $\Omega^\eps$, then
$ \tilde u^\eps (x) = \chi_\eps(x) \, u^\eps(x)$.
Notice that in the Dirichlet case we have $\tilde u^\eps = u^\eps$ by condition \eqref{Dcond}. We keep the notation just to simplify the proof.

Also, since $L^1 \left( [a,b] ; L^2(\Omega)  \right)$ is separable, 
we can extract a subsequence, still denoted by $\tilde u^\eps$, such that
\begin{equation} \label{weak}
\tilde u^\eps \rightharpoonup u^* \textrm{ weakly$^*$ in } L^\infty([a,b] ; L^2(\Omega)),
\end{equation}
for some $u^* \in L^\infty \left( [a,b] ; L^2(\Omega)  \right)$. Notice that $u^*(x,t) \equiv 0$ in $\R^N \setminus \Omega$.

\begin{proof}[Proof of Theorems \ref{theoD} and \ref{theoN}.]
From now on, we assume, without loss of generality, that $[a,b] = [0,T]$ for some $T>0$.
We pass to the limit in the variational formulation of the expression \eqref{nonlinear_VCF_G_Lipschitz}.
That is, for any $\varphi \in L^2(\Omega)$, we pass to the limit in the following form
\begin{equation} \label{550}
\begin{array}{ll}
\displaystyle \int_{\Omega} \varphi(x) \, \tilde u^\eps(x,t) \, dx & = 
\displaystyle
\int_{\Omega} \varphi(x) \, e^{-h_\eps(x) t} \, \chi_\eps(x) \, u_0(x) \, dx 
\\[10pt]
& \quad \displaystyle + \int_{\Omega} \varphi(x) \, \chi_\eps(x) \int_{0}^t e^{-h_\eps(x) (t-s)} \, f(x,\tilde u^\eps(x,s)) \, ds  dx  \\[10pt] 
&  \quad \displaystyle + 
\int_{\Omega} \varphi(x) \, \chi_\eps(x) \int_{0}^t e^{-h_\eps(x) (t-s)} \int_{\R^N} J(x-y) \, \tilde u^\eps(y,s) \, dy ds dx  \\[10pt]
&  \displaystyle  =  I_1^\eps + I_2^\eps + I_3^\eps.
\end{array}
\end{equation}

Since condition \eqref{Ncond} is much more involved, we will just present the proof under this assumption. 
The Dirichlet problem is simpler. 
First, we evaluate $I_1^\eps$. Due to \eqref{Ncond}, we have for any $x \in \R^N$ that 
$$
h_\eps(x) = \int_{\R^N \setminus A^\eps} J(x-y) \, dy
= \int_{\R^N} J(x-y) \left( 1 - \chi_\Omega(y) + \chi_\eps(y) \right) dy 
$$
where $\chi_\Omega$ is the characteristic functions of the open bounded set $\Omega$. 
Then, from assumption \eqref{chic}, it follows from Proposition \ref{propL} that
$$
h_\eps \to h_0 \quad \textrm{ strongly in } L^\infty(\Omega)
$$
where $h_0 \in L^\infty(\R^N)$ is given by 
\begin{equation} \label{a0}
h_0(x) = \int_{\R^N} J(x-y) \left( 1- \chi_\Omega(y) + \mathcal{X}(y) \right) dy.
\end{equation}
Consequently, we obtain that
\begin{equation} \label{eq537}
e^{h_\eps(x) t}  \to e^{h_0(x) t} \quad \textrm{ uniformly in } (x,t) \in [0,T] \times \Omega,
\end{equation}
and then, $I_1^\eps = \int_{\Omega} \varphi(x) \, e^{-h_\eps(x) t} \,  \chi_\eps(x) \, u_0(x) \, dx$ satisfies 
\begin{equation} \label{I1}
I^\eps_1 \to \int_\Omega \varphi(x) \, e^{-h_0(x) t} \, \mathcal{X}(x) \, u_0(x)\, dx.
\end{equation}

Notice that for the Dirichlet condition \eqref{Dcond}, we have $h_\eps(x) \equiv 1$, and then, we get
\begin{equation} \label{I1D}
I^\eps_1 \to \int_\Omega \varphi(x) \, e^{- t} \, \mathcal{X}(x) \, u_0(x)\, dx.
\end{equation}

Next, let us pass to the limit in $I_3^\eps$ as $\eps \to 0$ under \eqref{Ncond}. Recall that 
$$
I_3^\eps = \int_{\Omega} \varphi(x) \, \chi_\eps(x) \int_{0}^t e^{-h_\eps(x) (t-s)} \int_{\R^N} J(x-y) \, \tilde u^\eps(y,s) \, dy ds dx.
$$ 
In order to do that, let us consider 
$$
\mathcal{S}_\eps(x,t) = \int_{0}^t e^{-h_\eps(x)(t-s)} \int_{\R^N} J(x-y) \tilde u^\eps(y,s)  dy ds
$$
defined for any $(x,t) \in \R \times \Omega$.
Since the sequences $\tilde u^\eps$ and $e^{-h_\eps(x)t}$ satisfy \eqref{weak} and \eqref{eq537} respectively, 
we get from Proposition \ref{propL} that 
$$
\mathcal{S}_\eps(x,t)  \to  
\mathcal{S}_0(x,t) = \int_{0}^t e^{-h_0(x)(t-s)} \int_{\R^N} J(x-y) \, u^*(y,s)  dy ds
$$
for any $(x,t) \in \R \times \Omega$. Furthermore, for all $t \in [0,T]$, we have from \eqref{Kest} that 
$$
|\mathcal{S}_\eps(x,t)| \leq \int_0^t \| J(x-\cdot) \|_{L^2(\Omega)}  \| u^\eps(s, \cdot) \|_{L^2(\Omega)} \, ds 
\leq T \, K \, \|J\|_{L^\infty(\R^N)} |\Omega|^{1/2}.
$$

Thus, it follows from Convergence Dominated Theorem that 
\begin{equation} \label{weakU}
\mathcal{S}_\eps(\cdot,t) \rightharpoonup \mathcal{S}_0(\cdot,t) \quad \textrm{ weakly in } L^2(\Omega)
\end{equation}
for each $t \in [0,T]$. 
In fact, we have that  
\begin{equation} \label{strongU}
\mathcal{S}_\eps(\cdot,t) \to \mathcal{S}_0(\cdot,t) \quad \textrm{ strongly in } L^2(\Omega)
\end{equation}
since 
$$
|\mathcal{S}_\eps(x,t)|^2 \leq  T^2 K^2 |\Omega| \|J\|_{L^\infty(\R^N)}^2,
$$
and then, due to Convergence Dominated Theorem again, we have 
\begin{equation} \label{690}
\| \mathcal{S}_\eps(\cdot,t) \|_{L^2(\Omega)}  \to  \| \mathcal{S}_0(\cdot,t) \|_{L^2(\Omega)}
\end{equation}
for all $t \in [0,T]$.
The strong convergence \eqref{strongU} follows from \eqref{weakU} and \eqref{690}
since we are working in the Hilbert space $L^2(\Omega)$.

Therefore, we can compute $I_3^\eps$ for each $\varphi \in L^2(\Omega)$.
From \eqref{strongU} we have
\begin{eqnarray} \label{691}
I_3^\eps & = & \int_\Omega \varphi(x) \, \chi_\eps(x) \,  \mathcal{S}_\eps(x,t) \, dx \nonumber \\
& \to & \int_\Omega \varphi(x) \, \mathcal{X}(x) \, \mathcal{S}_0(x,t) \, dx \nonumber \\
& = & \int_\Omega \varphi(x) \, \mathcal{X}(x) \int_0^t e^{-h_0(x)(t-s)} \int_{\R^N} J(x-y) \, u^*(y,s) \, dy dx ds.
\end{eqnarray}

Finally, let us pass to the limit in 
$$
I_2^\eps = \int_{\Omega} \varphi(x) \, \chi_\eps(x) \int_{0}^t e^{-h_\eps(x) (t-s)} \, f(x,\tilde u^\eps(x,s)) \, ds  dx.
$$
We first note that there exists $D>0$ such that
\begin{equation} \label{fB}
\sup_{s \in [0,T]} \| f(\cdot, \tilde u^\eps(\cdot,s) ) \|_{L^2(\Omega^\eps)} \leq D.
\end{equation}
In fact, since $f = g \circ m_{\Omega^\eps}$ with $m_{\Omega^\eps}(x,0)=0$, we have
\begin{eqnarray*}
\| f(\cdot, \tilde u^\eps(\cdot,s) ) \|_{L^2(\Omega^\eps)}^2 
& \leq & 2 \left[  \| f(\cdot, \tilde u^\eps(\cdot, s) ) - f(\cdot, 0 ) \|_{L^2(\Omega^\eps)}^2 + \| f(\cdot, 0 ) \|_{L^2(\Omega^\eps)}^2\right] \\
& \leq & 2 \left[  \int_\Omega L_g^2 | m_{\Omega^\eps}(x, \tilde u^\eps(x,s) ) |^2 dx + \int_\Omega g(0)^2 dx \right]
\end{eqnarray*}
where $L_g$ is the Lipschitz constant of the function $g$.

On the other hand, we get from Hardy-Littewood maximal inequality a constant $\hat C>0$ such that 
$$
\frac{1}{|B_\delta(x)|} \left| \int_{B_\delta(x)} \tilde u^\eps(y,s) \, dy \right| \leq \hat C \| \tilde u^\eps(\cdot,t) \|_{L^2(\Omega)} \quad \textrm{ a.e. } \Omega.
$$
Hence, due to \eqref{177} and \eqref{Kest}, there exists a constant $\tilde C>0$ such that  
\begin{eqnarray*}
| m(x, \tilde u^\eps(x,s) )| 
& = & \frac{|B_\delta(x)|}{|B_\delta(x) \cap \Omega^\eps|} \left| \frac{1}{|B_\delta(x)|} \int_{B_\delta(x)} \tilde u^\eps(y,s) \, dy \right| \leq \tilde C
\end{eqnarray*}
for all $\epsilon \in (0, \eps_0)$, $s \in [0,T]$ and $x \in \Omega$ proving \eqref{fB}. 

Moreover, we have that
\begin{equation} \label{SmX}
m_{\Omega^\eps}(\cdot, \tilde u^\eps(\cdot, s)) \to m_\mathcal{X} (\cdot, u^*(\cdot,s)) \quad \textrm{ strongly in } L^2(\Omega) \textrm{ as } \eps \to 0
\end{equation}
for all $s \in [0,T]$.
In fact, for each $x \in \R^N$ and $0< \eps < \eps_0$, we get from \eqref{177} and \eqref{weak} that 
\begin{eqnarray*}
m_{\Omega^\eps}(x, \tilde u^\eps(x,s)) & = & \left( \int_{B_\delta(x)} \chi^\eps(y) \, dy \right)^{-1} \left( \int_{B_\delta(x)} \tilde u^\eps(y,s) \, dy \right) \\
& \to & \left( \int_{B_\delta(x)} \mathcal{X}(y) \, dy \right)^{-1} \left( \int_{B_\delta(x)} u^*(y,s) \, dy \right) 
= m_\mathcal{X}(x, u^*(x,s))
\end{eqnarray*}
where $m_\mathcal{X}$ is defined in \eqref{mX}.
Hence, since $m_{\Omega^\eps}(\cdot, \tilde u^\eps)$ is uniformly bounded in $\Omega \times [0,T]$, 
we can argue as in \eqref{strongU} to obtain \eqref{SmX} by Convergence Dominated Theorem. 

Now, using that $g$ is a Lipschitz continuous function, we can get from \eqref{fB} and \eqref{SmX} that 
\begin{equation} \label{fweak}
f(\cdot, u^\eps) \rightharpoonup f_\mathcal{X} (\cdot, u^*) \textrm{ weakly$^*$ in } L^\infty([a,b] ; L^2(\Omega))
\end{equation}
where $f_\mathcal{X}$ is defined in \eqref{fX}.
Consequently, we can argue as in \eqref{strongU} again, to get that
$$
\int_{0}^t e^{-h_\eps(x) (t-s)} \, f(x,\tilde u^\eps(x,s)) \, ds \to 
\int_{0}^t e^{-h_0(x) (t-s)} \, f_\mathcal{X}(x, u^*(x,s)) \, ds, \quad \textrm{ as } \eps \to 0.
$$

Consequently, we can pass to the limit in $I_2^\eps$ getting 
$$
I_2^\eps \to \int_{\Omega} \varphi(x) \, \mathcal{X}(x) \int_{0}^t e^{-h_0(x) (t-s)} \, f_\mathcal{X}(x, u^*(x,s)) \, ds  dx, \quad \textrm{ as } \eps \to 0.
$$

Thus, the limit of the integral equation \eqref{550} is 
\begin{equation} \label{700}
\begin{array}{l}
\displaystyle \int_{\Omega} \varphi(x) \, u^*(x,t) \, dx = 
\int_{\Omega} \varphi(x) \mathcal{X}(x) \left[ e^{-h_0(x)t} \, u_0(x) + \int_{0}^t e^{-h_0(x)(t-s)} \, f_\mathcal{X}(x, u^*(x,s)) \, ds \right] dx  \\[10pt] 
\qquad \quad \quad \displaystyle + 
 \int_{\Omega} \varphi(x) \, \mathcal{X}(x) \int_{0}^t e^{-h_0(x)(t-s)} \int_{\R^N} J(x-y) \, u^*(y,s) \, dy ds dx, \quad \forall \varphi \in L^2(\Omega),
\end{array}
\end{equation}
which implies 
\begin{equation} \label{exu*}
\begin{array}{l}
\displaystyle u^*(x,t) = e^{-h_0(x)t} \, \mathcal{X}(x) \, u_0(x) + \int_0^t e^{-h_0(x)(t-s)} \, \mathcal{X}(x) \, f_\mathcal{X}(x, u^*(x,s)) \, ds \\[10pt] 
\qquad \qquad \qquad \qquad \displaystyle + \int_0^t e^{-h_0(x)(t-s)} \, \mathcal{X}(x) \int_{\R^N} J(x-y) \, u^*(y,s) \, dy ds 
\end{array}
\end{equation}
for all $t \in [0,T]$ and a.e. $x$ in $\Omega$.
Thus, $u^* \in C^1([0,T];L^2(\Omega))$ and satisfies  
\begin{equation}
\begin{gathered}
u^*_t(x,t) = \mathcal{X}(x) \int_{\R^N} J(x-y) \, u^* (y,t) \, dy  - h_0(x) \, u^*(x,t) + \mathcal{X}(x) \, f_\mathcal{X}(x, u^*(x,t)),   \\
u^* (x,t) \equiv 0 \quad \textrm{ for } x \in \R^N \setminus \Omega \\
u^*(0,x) = \mathcal{X}(x) \, u_0(x) 
\end{gathered} 
\end{equation}
which can be rewritten as \eqref{715} under assumption \eqref{Ncond} 
with $\Lambda \in L^\infty(\R^N)$ given by
$$
\Lambda(x) = h_0(x) - \mathcal{X}(x).
$$

Finally, let us notice that $u^*$ is unique from Remark \ref{reE}.
Indeed, if we re-scale the time $t$ with $t = \mathcal{X}^{-1}(x) \, \tau$ and set 
$$
w(x,\tau) = u^*(x, {\mathcal{X}(x)}^{-1} \tau)
$$
we have that $w$ satisfies equation \eqref{reESolution} for $h(x) = {\mathcal{X}(x)}^{-1} \Lambda(x) \in L^\infty(\Omega)$. 
Thus, $u^*$ is unique which implies that the sequence $u^\eps$ converges weakly to $u^*$ as $\eps \to 0$. 
In this way, we conclude the proofs of Theorems \ref{theoD} and \ref{theoN}. 
\end{proof}

\section{A nonlocal equation with local nonlinearity}  \label{deltaS}

Now, we obtain a nonlocal equation with local nonlinearity from the limit problem given by Theorems \ref{theoD} and \ref{theoN}. 
We consider the limit problem depending on the parameter $\delta$, that is, the  equation associated to $u^{\delta} \in C^1([0,T];L^2(\Omega))$ which satisfies  
\begin{equation}\label{prob_delta}
\begin{gathered}
u^\delta_t(x,t) = \mathcal{X}(x) \int_{\R^N} J(x-y) \, u^\delta (y,t) \, dy  - h_0(x) \, u^\delta(x,t) + \mathcal{X}(x) \, f_\mathcal{X}(x, u^\delta(x,t)),   \\
u^\delta (x,t) \equiv 0 \quad \textrm{ for } x \in \R^N \setminus \Omega \\
u^\delta(0,x) = \mathcal{X}(x) \, u_0(x). 
\end{gathered} 
\end{equation}

The existence of the family of solutions $u^\delta$ of \eqref{exist_delta} are guaranteed by Proposition \ref{exist_delta}. Also, there exists $C>0$, 
independent of $\delta>0$, such that, for any bounded interval $[a,b] \subset \R$, 
\begin{equation} \label{Kest0}
\sup_{t \in [a,b]} \| u^\delta(\cdot,t) \|_{L^2(\Omega)} \leq C.
\end{equation}
Hence, if $\tilde \cdot$ denotes the extension by zero to the whole space $\R^N$, we also get that 
\begin{equation} \label{estdel}
\sup_{t \in [a,b]} \| \tilde u^\delta(\cdot,t) \|_{L^2(\Omega)} \leq C,
\end{equation}
and then, $\tilde u^\delta$ sets a uniformly bounded family in $L^\infty \left( [a,b] ; L^2(\Omega)  \right)$. 

Since $L^1 \left( [a,b] ; L^2(\Omega)  \right)$ is separable, 
we can extract a subsequence still set by $\tilde u^\delta$ such that
\begin{equation} \label{weak_delta}
\tilde u^\delta \rightharpoonup \bar{u} \textrm{ weakly$^*$ in } L^\infty([a,b] ; L^2(\Omega)),
\end{equation}
for some $\bar{u} \in L^\infty \left( [a,b] ; L^2(\Omega)  \right)$. Notice that $\bar{u}(x,t) \equiv 0$ in $\R^N \setminus \Omega$.

Then, we can proceed as in Section \ref{limit} to prove Theorem \ref{theo_delta}. Since the proof is very similar, we will leave the details to the reader.
Here we just pass to the limit in the nonlinear term
$$
\begin{gathered}
\int_\Omega \varphi(x) \mathcal{X}(x) f_{\mathcal{X}}(x, u^\delta(\cdot,t)) \, dx = 
\int_\Omega \varphi(x) \mathcal{X}(x) \, g\left( \frac{1}{\int_{B_\delta(x)} \mathcal{X}(y) dy} \int_{B_\delta(x)} u^\delta(y,t) dy \right) dx \\ 
\qquad = \int_\Omega \varphi(x) \mathcal{X}(x) \, g\left( \frac{|B_\delta(x)|}{\int_{B_\delta(x)} \mathcal{X}(y) dy} \frac{1}{|B_\delta(x)|} \int_{B_\delta(x)} u^\delta(y,t) dy \right) dx. 
\end{gathered}
$$
But, it is a direct consequence of Lebesgue Differentiation Theorem and the uniform estimate given by Corollary \ref{cordelta}.
Indeed, from \eqref{estH1} and the compact embedding from $H^1$ into $L^2$, we have $u^\delta(\cdot,t) \to \bar u(\cdot,t)$ strongly in $L^2(\Omega)$, as $\delta \to 0$, for any $t \in \R$. Thus, from Lebesgue Differentiation Theorem, we obtain
$$
\int_\Omega \varphi(x) \mathcal{X}(x) f_{\mathcal{X}}(x, u^\delta(\cdot,t)) \, dx \to \int_\Omega \varphi(x) \mathcal{X}(x) g \left( {\mathcal{X}}^{-1}(x) \,  \bar u(x,t) \right) \, dx
$$
which leads us to the limit equation \eqref{1067}.

\section{Appendix}

In this section we just compute the derivatives of the map $\Phi: \R^N \mapsto \R$ given by
$$
\Phi(x) = \int_{B(x)} u(y) \, dy
$$
where 
$
B(x) = \{ y \in \R^N \, : \, | x- y | < R \} 
$
is a ball of radius $R>0$ and $u$ is a smooth function.
Notice that, due to \cite[Lebesgue-Radon-Nikodym Theorem]{Rudin}, $\Phi$ is absolutely continuous whenever $u \in L^1(\R^N)$. 

From Taylor's formula, we have for any $v \in \R^N$ and $t \in \R$ that
$$
\Phi(x+tv) - \Phi(x) = \int_{B(x)} \{ u(y+tv) - u(y) \} dy = t \int_{B(x)} \nabla u(y) \cdot v dy + O(t^2). 
$$
Thus, by Green's Identity, we get the following expression 
\begin{equation} \label{derivative}
\nabla \Phi(x) \cdot v = \int_{B(x)} \nabla u(y) \cdot v dy = \int_{\partial B(x)} u(y) \, v \cdot N \, dS
\end{equation}
where $N$ is the normal vector on the boundary of the ball $\partial B(x)$. 
Finally, we observe that this formula \eqref{derivative} is in agreement with \cite[Theorem 1.11]{Henry}.

\vspace{0.7 cm}


\bibliography{bib_sastregom}

\typeout{get arXiv to do 4 passes: Label(s) may have changed. Rerun}

\end{document}